\newtheorem{Def}{\bf Definition}[subsection]
\newtheorem{Thm}[Def]{\bf Theorem}
\newtheorem{Lem}[Def]{\bf Lemma}
\newtheorem{Cor}[Def]{\bf Corollary}
\newtheorem{Pro}[Def]{\bf Proposition}
\newtheorem{ThmA}{\bf Theorem}
\title{\bf Examples of factors which have no Cartan subalgebras}
\author{Yusuke Isono\thanks{Department of Mathematical Sciences,
University of Tokyo, Komaba, Tokyo, 153-8914, Japan \protect \\  E-mail: \texttt{isono@ms.u-tokyo.ac.jp}}}
\date{}
\begin{document}
\maketitle

\begin{abstract}
We consider some conditions similar to Ozawa's condition (AO), and prove that if a non-injective factor satisfies such a condition and has the $\rm W^*CBAP$, then it has no Cartan subalgebras. As a corollary, we prove that $\rm II_1$ factors of universal orthogonal and unitary discrete quantum groups have no Cartan subalgebras. We also prove that continuous cores of type $\rm III_1$ factors with such a condition are semisolid as a $\rm II_\infty$ factor.
\end{abstract}

\section{\bf Introduction}

In the von Neumann algebra theory, the Cartan subalgebras give us many important information and fascinating examples. 
In fact, Cartan subalgebras always come from some orbit equivalence classes in the following sense: for a given separable factor $M$ and its Cartan subalgebra $A\subset M$, there exists the unique orbit equivalence class $\cal R$ (and the cocycle $\sigma$) on a standard space $X$ such that $(L^\infty(X)\subset L({\cal R},\sigma))\simeq (A\subset M)$ $\cite{FM77}$. This correspondence sometimes enables us to make use of the ergodic theory to analyze such class of factors. This is one of the main reasons why Cartan subalgebras have been studied for a long time.

For example, Sorin Popa gave first examples of $\rm II_1$ factors whose fundamental groups are trivial $\cite{Po 1}$. In the proof, he identified fundamental groups of these factors as that of obit equivalence classes, by some deformation/intertwining arguments between two Cartan subalgebras. Hence he essentially investigated their Cartan subalgebras. This is the first result of the rigidity theory of $\rm II_1$ factors.

From this pioneering work, there has been many remarkable works:\ realization of many outer automorphism groups and fundamental groups;\ new examples of prime factors;\ uniqueness and non-existence of Cartan subalgebras;\ $\rm W^*$-superrigidity and so on. In this paper, we concentrate our attention on a negative type result, that is, non-existence of Cartan subalgebras.

Here we recall the definition of Cartan subalgebras. Let $M$ be a von Neumann algebra and $A$ an abelian subalgebra of $M$. We say $A$ is a \textit{Cartan subalgebra} of $M$ if it satisfies the following conditions:
\begin{itemize}
\item there exists a faithful normal conditional expectation from $M$ onto $A$; 
\item $A$ is maximal abelian in $M$, that is, $A'\cap M=A$;
\item the normalizer group ${\cal N}_M(A)$ generates $M$, that is, ${\cal N}_M(A)''=M$.
\end{itemize}
Here the normalizer group is defined as ${\cal N}_M(A):=\{u\in {\cal U}(M)\mid uAu^*=A\}$. 
We note that Cartan subalgebras of $M$ are diffuse (i.e.\ which have no minimal projections) if so is $M$.

Historically, examples of von Neumann algebras which have no Cartan subalgebras were first discovered by Voiculescu $\cite{Vo96}$. He showed that the free group factors $L\mathbb{F}_n$ ($n\geq2$) have no Cartan subalgebras and his method relied on the free probability theory. 
Jung generalized this result to some free product von Neumann algebras $\cite{Ju05}$. Shlyakhtenko proved that free Araki--Woods factors of type ${\rm III}_\lambda$ have no Cartan subalgebras $\cite{Sh00}$.

In the rigidity theory, Ozawa and Popa gave first examples $\cite{OP 07}$. They proved that the free group factors are such examples, and they actually proved that these factors are strongly solid. 
Here we recall that a finite von Neumann algebra $M$ is \textit{strongly solid} if for any diffuse injective subalgebra $A\subset M$, the normalizer ${\cal N}_M(A)$ generates an injective von Neumann algebra, that is, ${\cal N}_M(A)''$ is injective. It is easy to see that if a finite von Neumann algebra $M$ is strongly solid, then any non-injective diffuse von Neumann subalgebra of $M$ has no Cartan subalgebras. Hence their result is stronger than that of Voiculescu.

After the work of Ozawa and Popa, there has been many non-existence results, and in the present paper we follow $\cite{PV12}$, in which Popa and Vaes proved remarkable uniqueness and non-existence results of Cartan subalgebras. In the same paper they gave a new proof of the fact that factors of weakly amenable and bi-exact groups are strongly solid (this was first proved by Chifan, Sinclair, and Udrea $\cite[\rm Corollary\ 0.2]{CSU11}$ with an equivalent notion of bi-exactness called \textit{array}, see $\cite{CS}$ and $\cite[\rm Proposition\ 2.1]{CSU11}$). We refer to this new proof. In fact, we will prove the same statement for more general von Neumann algebras which are not necessarily group von Neumann algebras.

For this purpose, we need notions of general von Neumann algebras which correspond weak amenability and bi-exactness. It is known that weak amenability has such a notion called the $\rm W^*CBAP$ (see Subsection $\ref{wk am}$), but bi-exactness does not. Ozawa's condition (AO) is a candidate but  this is not enough for us. We will investigate it in Section $\ref{sim ao}$. See $\cite{Sh04}$, $\cite{GJ07}$, and $\cite{Is}$ for other notions similar to condition (AO).

After this consideration, we prove the following main theorems.

\begin{ThmA}[{Theorem \ref{thmA}}]\label{A}
Let $M$ be a $\rm II_1$ factor with separable predual. If $M$ satisfies condition $\rm (AO)^+$ (see Definition $\ref{ao+}$) and  has the $\it W^*CBAP$, then $M$ is strongly solid.
\end{ThmA}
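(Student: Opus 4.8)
The plan is to adapt the argument of Popa and Vaes \cite{PV12} showing that factors of weakly amenable bi-exact groups are strongly solid, replacing bi-exactness by condition $\rm (AO)^+$ at every step. Fix a diffuse injective von Neumann subalgebra $A\subseteq M$; since $M$ is a $\rm II_1$ factor, $A$ automatically carries the $\tau$-preserving conditional expectation, and by Connes' theorem injectivity of $A$ is the same as its amenability. Put $P:=\mathcal N_M(A)''$. As injectivity and amenability coincide, it suffices to prove that $P$ is amenable.

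First I would extract \emph{weak compactness} from the $\rm W^*CBAP$. Since $A$ is injective, it carries a hypertrace; combining this with a net of normal completely bounded multipliers of $M$ witnessing the $\rm W^*CBAP$ and averaging, one obtains, exactly as in Ozawa--Popa \cite{OP 07}, that the inclusion $A\subseteq M$ is weakly compact relative to $\mathcal N_M(A)$: there is a net $(\eta_n)$ of unit vectors in the positive cone of $L^2(A\bar\otimes\bar A)$ with $\|(a\otimes1)\eta_n-(1\otimes\bar a)\eta_n\|\to0$ and $\|(u\otimes\bar u)\,\eta_n\,(u\otimes\bar u)^*-\eta_n\|\to0$ for all $a\in A$ and $u\in\mathcal N_M(A)$, while $\langle(x\otimes1)\eta_n,\eta_n\rangle\to\tau(x)$ for $x\in M$.

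The core of the proof is a dichotomy in the spirit of \cite{PV12}: \emph{either} $A\preceq_M\mathbb C$, \emph{or} $P=\mathcal N_M(A)''$ is amenable. Since $A$ is diffuse, $A\preceq_M\mathbb C$ would force $A$ to have a nonzero minimal projection, which is impossible; so the second alternative holds and we are done. To prove the dichotomy I would use the $\rm C^*$-algebra $B$ supplied by condition $\rm (AO)^+$, with $M\subseteq B\subseteq\mathbb B(L^2M)$, $JBJ=B$, $[B,JBJ]\subseteq\mathbb K(L^2M)$, together with the exactness/approximation property built into the definition. Pushing the weakly compact net $(\eta_n)$ through the associated ``boundary'' completely positive map, and exploiting that the left $A$-action is small at infinity --- a consequence of $A$ being diffuse and $[B,JBJ]\subseteq\mathbb K(L^2M)$ --- one shows that the completely positive map naturally attached to the Jones basic construction $\langle M,e_A\rangle$ is compact relative to $A$. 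Feeding this back into $(\eta_n)$ and using near-$A$-centrality of the $\eta_n$ together with $u\eta_n u^*\approx\eta_n$ for $u\in\mathcal N_M(A)$, one produces a $P$-central state on $\langle M,e_A\rangle$ restricting to $\tau$ on $M$; that is, $P$ is amenable relative to $A$ in $M$. Since $A$ is itself amenable, transitivity of relative amenability \cite{OP 07} then yields that $P$ is amenable.

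The main obstacle is precisely the middle step: the passage from the relative commutant $A'\cap M$ (for which bi-exactness alone would suffice, as in Ozawa's solidity theorem) to the full normalizer $\mathcal N_M(A)$. Since no amenable boundary action is available, the estimate showing that the basic-construction map is compact relative to $A$ on the left must be derived purely operator-algebraically from $\rm (AO)^+$, carefully balancing the compact perturbation $[B,JBJ]\subseteq\mathbb K(L^2M)$, the exactness/nuclearity encoded in $\rm (AO)^+$, and the completely bounded approximations of the $\rm W^*CBAP$; and then the resulting error terms have to be absorbed uniformly against the near-central vectors $\eta_n$. Making all these uniform estimates close up is the delicate point of the argument; once it is in place, the remainder is a transcription of \cite{OP 07} and \cite{PV12} into the framework of condition $\rm (AO)^+$.
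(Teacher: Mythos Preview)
Your high-level plan is right: extract weak compactness of the normalizer action from the $\mathrm{W^*CBAP}$ via Ozawa--Popa, then use the $\mathrm{(AO)^+}$ hypothesis to produce a $P$-central state witnessing amenability of $P=\mathcal N_M(A)''$. This is also exactly what the paper does. However, the technical content of your middle step does not match the hypothesis, and the route you sketch through relative amenability is not the one taken.

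\medskip

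\textbf{A wrong reading of condition $\mathrm{(AO)^+}$.} You describe the data as ``a $C^*$-algebra $B$ with $M\subseteq B\subseteq\mathbb B(L^2M)$, $JBJ=B$, $[B,JBJ]\subseteq\mathbb K(L^2M)$''. That is a small-at-infinity / bi-exactness type formulation, and it is \emph{not} what Definition~\ref{ao+} gives you. Condition $\mathrm{(AO)^+}$ supplies a $\sigma$-weakly dense, locally reflexive $C^*$-subalgebra $M_0\subset M$ together with a u.c.p.\ map $\theta\colon M_0\otimes JM_0J\to\mathbb B(L^2M)$ such that $\theta(a\otimes JbJ)-aJbJ\in\mathbb K(L^2M)$. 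There is no boundary algebra containing $M$, and the commutator condition you write down is not available. Consequently your sentence about ``the left $A$-action being small at infinity'' and the ``boundary completely positive map'' has no referent in this setting.

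\medskip

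\textbf{What actually replaces bi-exactness.} The entire use of $\mathrm{(AO)^+}$ in the paper is the elementary Lemma~\ref{ao net}: for $S\in D_0:=M_0\odot M_0^{\rm op}\odot P^{\rm op}\odot P$, the u.c.p.\ lift $\theta$ gives $(\theta\otimes\mathrm{id})\circ\Psi(S)-\Theta(S)\in\mathbb K(L^2M)\otimes\mathbb B(L^2P)$, and hence $\limsup_j\|\Theta(S)(p_j^\perp\otimes 1)\|\le\|\Psi(S)\|$ for any finite-rank projections $p_j\nearrow 1$. Combined with Lemma~\ref{net2} (which kills $\mathbb K(L^2M)\otimes 1$ against the state $\Omega_1$ coming from the weakly compact net, using only that $A$ is diffuse), one gets $|\Omega_1(\Theta(S))|\le\|\Psi(S)\|$ on $D_0$. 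The $\mathrm{W^*CBAP}$ together with local reflexivity of $M_0$ (Lemma~\ref{ao w}) is then used to pass from $D_0$ to $D$, and a Hahn--Banach extension produces a $P$-central state on $p\mathbb B(L^2M)p$ restricting to $\tau$ on $pMp$. This gives injectivity of $P$ directly.

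\medskip

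\textbf{No basic construction, no relative amenability.} The paper does not pass through $\langle M,e_A\rangle$, a dichotomy $A\preceq_M\mathbb C$, or transitivity of relative amenability. The central state lives on $\mathbb B(L^2M)$ itself, and amenability of $P$ is obtained in one step. Your route is not wrong in spirit, but as written it rests on a formulation of $\mathrm{(AO)^+}$ the paper does not provide, and it obscures the one genuinely new ingredient (the u.c.p.\ lift $\theta$) that makes the Popa--Vaes scheme go through without a group.
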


\begin{ThmA}[Theorem \ref{thmB}]\label{B}
Let $M$ be a non-injective type $\rm III$ factor with separable predual and $\phi$ a faithful normal state on $M$. If $(M,\phi)$ satisfies condition $\rm (AOC)^{+}$ (see Definition $\ref{aoc+}$) and has the $\it W^*CBAP$, then $M$ has no {$\phi$-Cartan subalgebras}.
\end{ThmA}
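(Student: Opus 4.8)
The plan is to derive Theorem~\ref{B} from Theorem~\ref{A} by passing to the continuous core. Suppose, for contradiction, that $M$ possesses a $\phi$-Cartan subalgebra $A$, i.e.\ $A$ is a Cartan subalgebra of $M$ whose (unique) normal conditional expectation $E_{A}\colon M\to A$ satisfies $\phi\circ E_{A}=\phi$. By Takesaki's theorem $\sigma^{\phi}_{t}(A)=A$ and $\sigma^{\phi}_{t}|_{A}=\sigma^{\phi|_{A}}_{t}=\mathrm{id}_{A}$ (as $A$ is abelian), so $A\subseteq M^{\phi}$ and $E_{A}$ commutes with $\sigma^{\phi}$. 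Form the continuous core $\widetilde{M}:=c_{\phi}(M)=M\rtimes_{\sigma^{\phi}}\mathbb{R}$, a semifinite von Neumann algebra with separable predual carrying its canonical faithful normal semifinite trace $\mathrm{Tr}$; since $M$ is not injective, neither is $\widetilde{M}$. Because $\sigma^{\phi}$ acts trivially on $A$, the subalgebra $\widetilde{A}:=A\vee L(\mathbb{R})=A\mathbin{\overline{\otimes}}L(\mathbb{R})$ is abelian, and the $\sigma^{\phi}$-equivariant expectation $E_{A}$ extends to a $\mathrm{Tr}$-preserving conditional expectation $\widetilde{M}\to\widetilde{A}$; in particular $\mathrm{Tr}|_{\widetilde{A}}$ is faithful, normal and semifinite. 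A direct computation---or, in the measured-groupoid picture of the Cartan inclusion $A\subseteq M$ via \cite{FM77}, the passage to the skew product of the associated equivalence relation by its Radon--Nikodym cocycle (the Maharam extension)---shows that $\widetilde{A}$ is a Cartan subalgebra of $\widetilde{M}$. As $\widetilde{A}$ is diffuse with separable predual, fix an increasing sequence of nonzero finite projections $p_{n}\in\widetilde{A}$ with $p_{n}\to 1$ strongly.

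Next I would show that each corner $p_{n}\widetilde{M}p_{n}$ satisfies the two hypotheses of Theorem~\ref{A}. The $\mathrm{W}^{*}\mathrm{CBAP}$ is inherited because weak amenability, with the same Cowling--Haagerup constant, passes through a crossed product by an amenable locally compact group (here $\mathbb{R}$) and through compression by a projection. The bi-exactness-type hypothesis is exactly what condition $(\mathrm{AOC})^{+}$ is designed to deliver: representing $\widetilde{M}$ in its natural way on $L^{2}(M,\phi)\mathbin{\overline{\otimes}}L^{2}(\mathbb{R})$, one checks that $(\mathrm{AOC})^{+}$ for $(M,\phi)$ (Definition~\ref{aoc+}) implies condition $(\mathrm{AO})^{+}$ for $p_{n}\widetilde{M}p_{n}$ (Definition~\ref{ao+}).

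Granting these two points, $p_{n}\widetilde{M}p_{n}$ is a finite von Neumann algebra with separable predual that has the $\mathrm{W}^{*}\mathrm{CBAP}$ and satisfies $(\mathrm{AO})^{+}$, and $p_{n}\widetilde{A}p_{n}$ is a diffuse (hence injective) abelian subalgebra of it whose normalizer generates $p_{n}\widetilde{M}p_{n}$. Applying Theorem~\ref{A}---when $\widetilde{M}$ is not a factor, i.e.\ $M$ is not of type $\mathrm{III}_{1}$, one uses the routine extension of its proof to finite von Neumann algebras that need not be factors: since a Cartan subalgebra is maximal abelian, the Popa--Vaes dichotomy \cite{PV12} reduces here to ``$p_{n}\widetilde{A}p_{n}\prec\mathbb{C}$, or $p_{n}\widetilde{M}p_{n}$ is amenable relative to $\mathbb{C}$ (hence injective)'', and the first alternative is impossible because $p_{n}\widetilde{A}p_{n}$ is diffuse---we conclude that each $p_{n}\widetilde{M}p_{n}$ is injective. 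Letting $n\to\infty$ (injectivity, equivalently semidiscreteness, passes from such an increasing family of corners to the whole algebra), $\widetilde{M}$ is injective; but $M$ is injective if and only if its continuous core is, so $M$ would be injective, a contradiction.

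The step I expect to be the main obstacle is the transfer of the C$^{*}$-algebraic hypothesis in the second paragraph. Proving that $(\mathrm{AOC})^{+}$ for $(M,\phi)$ forces $(\mathrm{AO})^{+}$ for the finite corners of the core amounts to tracking carefully, after amplifying by $L^{2}(\mathbb{R})$ and forming the crossed product, which C$^{*}$-algebra inside $B\!\left(L^{2}(M,\phi)\mathbin{\overline{\otimes}}L^{2}(\mathbb{R})\right)$ and which ideal of operators ``compact relative to the core'' enter the (AO)-type statement, and controlling their interaction with the modular automorphism group $\sigma^{\phi}$; this is precisely the point at which the exact formulation of condition $(\mathrm{AOC})^{+}$ must be calibrated, and it is where essentially all the work resides. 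A secondary, purely technical point is the passage from strong solidity of the corners of $\widetilde{M}$ back to injectivity of $\widetilde{M}$ when $\widetilde{M}$ is not a factor, but this is handled by the degeneration of the dichotomy noted above and requires no new idea.
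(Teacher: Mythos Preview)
Your overall architecture matches the paper's: pass to the continuous core, observe that the $\phi$-Cartan $A\subset M$ gives a Cartan $\widetilde{A}\subset\widetilde{M}$, cut by a finite-trace projection, and run the Popa--Vaes argument in the corner to force injectivity. The gap is precisely at the step you yourself flag: the assertion that $(\mathrm{AOC})^{+}$ for $(M,\phi)$ yields $(\mathrm{AO})^{+}$ for $p_n\widetilde{M}p_n$ is false, and no amount of bookkeeping repairs it. The ideal in Definition~\ref{aoc+} is $\mathcal{K}=\mathbb{K}(L^2(M))\otimes\mathbb{B}(L^2(\mathbb{R}))$, not $\mathbb{K}(L^2(M)\otimes L^2(\mathbb{R}))$; compressing an element such as $e\otimes 1\in\mathcal{K}$ (with $e$ of rank one) by $p\tilde{J}p\tilde{J}$ does not produce a compact operator on the compressed standard space, because the $L^2(\mathbb{R})$-direction remains infinite-dimensional. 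Indeed the paper itself furnishes counterexamples: for the matrices $F$ treated in Subsection~\ref{so}, $(L^\infty(A_o(F)),h)$ satisfies $(\mathrm{AOC})^{+}$ by Proposition~\ref{quan aoc+}, yet the finite corners of its core are shown there to be \emph{not} solid, hence cannot satisfy $(\mathrm{AO})^{+}$ (which implies solidity by Ozawa's theorem). So Theorem~\ref{thmA} is simply not applicable to $p_n\widetilde{M}p_n$.

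The paper therefore does not reduce to Theorem~\ref{thmA}; it reruns its proof inside the core with the ideal $\mathcal{K}$ in place of the compacts. The one substantive modification is in the analogue of Lemma~\ref{net2}: one must show $\Omega_1(x\otimes 1_{L\mathbb{R}}\otimes 1_P)=0$ for every $x\in\mathbb{K}(L^2(M))$, and this comes from diffuseness of the original Cartan $B$ inside $M$ (its unitaries act on $L^2(M)$ and average rank-one operators there to zero), not merely from diffuseness of $B\otimes pL\mathbb{R}p$ inside the corner. With that in hand, the analogue of Lemma~\ref{ao net} goes through verbatim using the u.c.p.\ lift supplied by $(\mathrm{AOC})^{+}$, and the remainder of the argument is identical to that of Theorem~\ref{thmA}.
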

{Here} \textit{$\phi$-Cartan subalgebra} means a Cartan subalgebra which has a $\phi$-preserving faithful normal conditional expectation $E$, that is, $\phi=\phi\circ E$.

To prove Theorem \ref{A}, we need only slight modifications of the proof of (a special case of) $\cite[\rm Theorem\ 3.1]{PV12}$. Theorem \ref{B} can be proved by seeing its continuous core, and this idea comes from $\cite{HR}$ and $\cite{HV12}$. Since condition $\rm (AOC)^+$ is similar to condition (AO) with respect to the continuous core, we naturally deduce the following primeness result. In the theorem below, Tr means the canonical semifinite trace on the continuous core.


\begin{ThmA}\label{C}
Let $M$ be a von Neumann algebra with separable predual and $\phi$ a faithful normal state on $M$. Let $\cal M$ be its continuous core with respect to $\phi$ and $p$ a projection in $\cal M$ such that $\mathrm{Tr}(p)<\infty$. If $(M,\phi)$ satisfies condition $\rm (AOC)^{+}$, then $p{\cal M}p$ is semisolid. In particular, $\cal M$ is a semisolid type $\rm II_\infty$ factor if $M$ is a type $\rm III_1$ factor.
\end{ThmA}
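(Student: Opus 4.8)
The plan is to reduce the statement to a structural dichotomy for subalgebras of the continuous core, obtained by feeding condition $\mathrm{(AOC)}^+$ into the Ozawa-style argument that bi-exactness inputs give ``solidity''-type conclusions. First I would fix the continuous core $\mathcal{M} = M \rtimes_{\sigma^\phi} \mathbb{R}$ acting on $L^2(M)\otimes L^2(\mathbb{R})$ and unpack condition $\mathrm{(AOC)}^+$ into its working form: a boundary piece $\mathbb{B}$ (a C$^*$-algebra acting on the standard Hilbert space, small relative to the compacts) together with the fact that the two commuting representations of $M$ and its opposite on $L^2(M)$ extend, modulo compacts, to a $*$-homomorphism from the minimal tensor product $\mathbb{B} \otimes_{\min} \mathbb{B}^{\mathrm{op}}$ (or the appropriate core-theoretic analogue) into $\mathbb{B}(L^2(M))/\mathbb{K}$. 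The point of the ``$+$'' and the ``C'' is precisely that this data is compatible with passing to the crossed product by the modular flow, so that one gets, for the semifinite algebra $\mathcal{M}$, a completely positive lifting / nuclearity-type statement relating $\mathcal{M}$ and $\mathcal{M}^{\mathrm{op}}$ acting on $L^2(\mathcal{M})$ modulo the ideal generated by the finite-trace projections.

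Next, for a fixed finite-trace projection $p \in \mathcal{M}$ and a diffuse subalgebra $Q \subset p\mathcal{M}p$, I would run the standard Ozawa argument: if $Q$ is not amenable relative to nothing (i.e.\ if $Q$ has no amenable direct summand as a nontrivial piece — recall semisolidity asks that $Q' \cap p\mathcal{M}p$ be injective whenever $Q$ has \emph{no} amenable direct summand, or phrased dually), then the bimodule $L^2(p\mathcal{M}p)$ viewed as a $Q$-$Q$ bimodule, when pushed through the $\mathrm{(AOC)}^+$ data, is weakly contained in a coarse bimodule; combined with non-amenability of $Q$ this forces the relative commutant $Q' \cap p\mathcal{M}p$ to be injective. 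Here I would lean on the finite-von-Neumann-algebra technology (even though $\mathcal{M}$ is type $\mathrm{II}_\infty$, cutting by $p$ makes $p\mathcal{M}p$ finite, so Popa's intertwining and the Ozawa malleable-bimodule estimates apply verbatim) and on the fact, recorded in the PV/CSU line of work and surely available from the earlier sections of this paper, that a bi-exactness-type input yields exactly this ``non-amenable subalgebras have injective relative commutant'' conclusion — which is the \emph{definition} of semisolidity. The ``in particular'' clause is then immediate: when $M$ is a type $\mathrm{III}_1$ factor, $\mathcal{M}$ is a type $\mathrm{II}_\infty$ factor by Connes--Takesaki, every finite-trace corner $p\mathcal{M}p$ is semisolid by the first part, and one checks this is equivalent to $\mathcal{M}$ itself being semisolid as a $\mathrm{II}_\infty$ factor.

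The main obstacle I expect is the compatibility of the boundary data with the modular flow, i.e.\ showing that condition $\mathrm{(AOC)}^+$ on $(M,\phi)$ genuinely descends to a usable ``condition (AO)-type'' statement on the continuous core $\mathcal{M}$ acting on $L^2(\mathcal{M})$, with the finite-trace ideal playing the role that the compacts play in the $\mathrm{II}_1$ case. Concretely: the modular automorphism group must be implemented by a continuous family of unitaries that normalizes the boundary C$^*$-algebra $\mathbb{B}$ up to the correct ideal, and one must verify that the crossed product $\mathbb{B} \rtimes \mathbb{R}$ (or its image modulo the ideal) still sees both $\mathcal{M}$ and $\mathcal{M}^{\mathrm{op}}$ commuting. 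I would handle this by choosing $\phi$-compatible generators, using the explicit formula $\sigma^\phi_t = \mathrm{Ad}(\Delta_\phi^{it})$ on the standard form, and checking on a dense set of core-theoretic elements of the form $\int f(t)\, x \lambda_t \, dt$; the ``C'' in $\mathrm{(AOC)}^+$ should be designed exactly so that this goes through, so much of this is bookkeeping, but it is the step where the hypothesis is actually consumed and where an error would invalidate everything downstream. The remaining steps — the Ozawa bimodule estimate, Popa's intertwining-by-bimodules to locate amenable pieces, and the passage between ``all finite corners semisolid'' and ``$\mathcal{M}$ semisolid'' — are by now standard and I would cite the earlier sections and the PV/CSU framework rather than redo them.
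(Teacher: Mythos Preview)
Your proposal rests on a misreading of the hypothesis. You identify as ``the main obstacle'' showing that condition $\mathrm{(AOC)}^+$ on $(M,\phi)$ descends to an (AO)-type statement on the core $\mathcal{M}$. But that is precisely what condition $\mathrm{(AOC)}^+$ \emph{is}: by Definition~\ref{aoc+} it already asserts that the multiplication map on $(A\rtimes_r\mathbb{R})\otimes \tilde{J}(A\rtimes_r\mathbb{R})\tilde{J}$ has a u.c.p.\ lift modulo the ideal $\mathcal{K}=\mathbb{K}(L^2(M))\otimes\mathbb{B}(L^2(\mathbb{R}))$. There is nothing to descend; the hypothesis hands you the core-level statement outright. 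What you describe as bookkeeping to be carried out is the content of Subsection~\ref{cond core}, where the condition is \emph{verified for the quantum group examples}---that is not part of the proof of Theorem~\ref{C}.

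The genuine difficulty, which your sketch does not touch, is that the ideal $\mathcal{K}=\mathbb{K}(L^2(M))\otimes\mathbb{B}(L^2(\mathbb{R}))$ is far larger than $\mathbb{K}(L^2(\mathcal{M}))$, so Ozawa's solidity argument does not apply as stated: a merely diffuse subalgebra is not enough. The paper follows instead the Kurosh-type template of \cite{solid 3}. Given a type~$\mathrm{II}$ subalgebra $N\subset p\mathcal{M}p$ (reduced to the hyperfinite $\mathrm{II}_1$ factor), one forms the proper conditional expectation $\Psi_N$ onto $N'$ by averaging, and the whole proof comes down to $\mathcal{K}\subset\ker\Psi_N$. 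This is obtained by a dimension argument: any nonzero spectral projection of $\Psi_N(e\otimes 1)$ would yield an $N$--$L\mathbb{R}q$ subbimodule of $L^2(\mathcal{M})$ with finite right $L\mathbb{R}$-dimension, while Popa's intertwining theorem says this dimension must be zero or infinite because $N$ (type~$\mathrm{II}$) cannot embed in $L\mathbb{R}$ (type~$\mathrm{I}$). That type mismatch is exactly why the conclusion is \emph{semisolidity} and not solidity---a distinction your outline does not account for (and your parenthetical gloss ``$Q'\cap p\mathcal{M}p$ injective whenever $Q$ has no amenable direct summand'' is not the definition of semisolidity). The PV/CSU weakly-compact-action machinery you invoke plays no role here; there is no $\mathrm{W}^*$CBAP assumption in Theorem~\ref{C}.
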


Here we recall that a finite von Neumann algebra $M$ is \textit{semisolid} (respectively, \textit{solid}) if for any type $\rm II$ (respectively, diffuse) subalgebra $N\subset M$, the relative commutant $N'\cap M$ is injective. For a semifinite von Neumann algebra $M$, semisolidity (and solidity) is defined as that of $pMp$ for all finite projections $p\in M$. We also recall that $M$ is \textit{prime} if for any tensor decomposition $M=M_1\otimes M_2$, one of $M_i$ ($i=1,2$) is of type I. It is not difficult to see that semisolidity implies primeness for non-injective semifinite factors. Hence the conclusion of the theorem above implies primness.

The aim of our generalization is, of course, to find new examples. Factors of universal discrete quantum groups $A_o(F)$ and $A_u(F)$ (see Subsection $\ref{uni dis}$) are our main targets. On the one hand, it is known that they satisfy condition (AO) $\cite{VVe}\cite{VV08}$, and we will observe that they in fact satisfy a little stronger conditions. See Section $\ref{sim ao}$ for the details. On the other hand, weak amenability of them were shown very recently $\cite{Fr12}$ but only for the case that they are monoidally equivalent to $A_o(1_n)$ or $A_u(1_n)$. Thus combined with the main theorems, we have the following corollary.

\bigskip
\noindent
{\bf Corollary.} \textit{Let $\mathbb{G}$ be a universal discrete quantum group $A_o(F)$ or $A_u(F)$ for $F\in \mathrm{GL}(n,\mathbb{C})$ $(n\geq3)$. Denote the Haar state by $h$.}
\begin{itemize}
\item[$(1)$] \textit{If $F=1_n$, then $L^\infty(\mathbb{G})$ is strongly solid. In particular, $L^\infty(\mathbb{G})$ has no Cartan subalgebras.}
\item[$(2)$] \textit{If $L^\infty(\mathbb{G})$ is non-injective and has the $W^*$CBAP, then it has no $h$-Cartan subalgebras.}
\item[$(3)$] \textit{If $L^\infty(\mathbb{G})$ is a non-injective type $\rm III_1$ factor, then the continuous core $L^\infty(\mathbb{G})\rtimes_{\sigma^h}\mathbb{R}$ is a semisolid, in particular prime, $\rm II_\infty$ factor.}
\end{itemize}

We will observe in Subsection $\ref{so}$ that the continuous core of $L^\infty(A_o(F))$ is semisolid but never solid for some concrete matrix $F$.

Theorem \ref{B} works for the case that $F$ is not an identity matrix, but we do not know whether $L^\infty(\mathbb{G})$ has the $\rm W^*$CBAP or not  for a general matrix $F$. If one obtains this property, Theorem \ref{B} is applicable for every non-injective von Neumann algebras of $A_o(F)$ and $A_u(F)$, and hence one has non-existence results for them. We leave this problem as follows.


\bigskip
\noindent
{\bf Problem.} \textit{When do von Neumann algebras of universal discrete quantum groups $A_o(F)$ and $A_u(F)$ have the $W^*$CBAP?}

\section{\bf Preliminaries}




\subsection{\bf Tomita--Takesaki theory}\label{TT}

For Tomita--Takesaki theory, we refer the reader to $\cite{Tak 2}$.

Let $M$ be a von Neumann algebra and $\phi$ a faithful normal state on $M$. We first consider the following anti-linear map:
\begin{equation*}
S\colon M\Omega \rightarrow M\Omega\subset L^2(M,\phi) ; \ a\Omega \mapsto a^*\Omega,
\end{equation*}
where $\Omega$ is the canonical cyclic separating vector associated with $\phi$. This map is closable on $L^2(M,\phi)$ and write as $S=J\Delta^{1/2}$ the polar decomposition of $S$. We call $\Delta$ the \textit{modular operater} and $J$ the \textit{modular conjugation}. The following fundamental relations are important:
\begin{equation*}
JMJ=M',\quad \Delta^{it}M\Delta^{-it}=M \quad(t\in\mathbb{R}).
\end{equation*}
In the paper, we frequently identify $JMJ$ as the opposite algebra $M^{\rm op}$ with the obvious correspondence.
The GNS-representation on the Hilbert space $L^2(M,\phi)$ (with a faithful normal state $\phi$) is called a \textit{standard representation} (see $\cite[\rm Definition\ IX.1.14]{Tak 2}$ for the intrinsic definition).

From the relation above, $\sigma_t^{\phi}(a):=\Delta^{it}a\Delta^{-it}$ $(a\in M, t\in \mathbb{R})$ defines a one parameter automorphism group on $M$, which is called the \textit{modular automorphism group} on $M$ associated with $\phi$. The \textit{continuous core} of $M$ is defined as the crossed product von Neumann algebra $\widetilde{M}:=M\rtimes_{\sigma^\phi} \mathbb{R}$ and it does not depend on the choice of $\phi$. We can then construct a semifinite weight $\tilde{\phi}$ on the core called the \textit{dual weight of} $\phi$ $\cite[\rm Definition\ X.1.16]{Tak 2}$, which of course depends on $\phi$. 
The dual weights are always faithful and its modular action becomes inner (more precisely $\sigma_t^{\tilde{\phi}}=1\otimes \lambda_t$) so that $\widetilde{M}$ is always semifinite. A canonical semifinite trace on $\widetilde{M}$ is given by $\mathrm{Tr}:=\tilde{\phi}(h\cdot )$, where $h$ is the self-adjoint map satisfying $h^{it}=1\otimes \lambda_t$ $(t\in \mathbb{R})$.
We say a type $\rm III$ factor $M$ is of type $\rm III_1$ if the continuous core is a factor of type $\rm II_\infty$ (see $\cite[\rm Definition\ XII.1.3]{Tak 2}$ for definitions of type $\rm III_\lambda$). 

The associated representations
\begin{alignat*}{5}
\pi&\colon& M\rightarrow \mathbb{B}(L^2(M)\otimes L^2(\mathbb{R})) &;&\ &x\mapsto\int_\mathbb{R}\sigma_{-t}^\phi(x)\otimes e_t\cdot dt ,\\
u&\colon& \mathbb{R}\rightarrow {\cal U}(L^2(M)\otimes L^2(\mathbb{R})) &;&\ &t\mapsto 1\otimes \lambda_t,&
\end{alignat*}
where $(\int_\mathbb{R}\sigma_{-t}^\phi(x)\otimes e_t\cdot dt\xi)(s):=\sigma_{-s}^\phi(x)\xi(s)$ and $((1\otimes\lambda_t)\xi)(s):=\xi(-t+s)$ for any $\xi\in L^2(M)\otimes L^2(\mathbb{R})$, give a standard representation of $M\rtimes_{\sigma^\phi} \mathbb{R}$ on $L^2(M)\otimes L^2(\mathbb{R})$ with respect to the dual weight $\tilde{\phi}$. 

A conditional expectation from $\widetilde{M}$ onto $L\mathbb{R}$ is defined by $E_{L\mathbb{R}}(x\lambda_t):=\phi(x)\lambda_t$ ($x\in M, t\in\mathbb{R}$). Then $E_{L\mathbb{R}}$ is $\tilde{\phi}$-preserving and Tr-preserving.
The modular conjugation $\tilde{J}$ on $\widetilde{M}$ is given by 
\begin{equation*}
(\tilde{J}\xi)(t):=\Delta^{-it}J\xi(-t)\quad (t\in \mathbb{R}, \xi\in L^2(M)\otimes L^2(\mathbb{R}))
\end{equation*}
$\cite[\rm Lemma\ X.1.13]{Tak 2}$, and one can easily verify that
\begin{eqnarray*}
\tilde{J}\pi(x)\tilde{J}=JxJ\otimes 1\quad (x\in M), \quad \tilde{J}(1\otimes \lambda_t)\tilde{J}=\Delta^{it}\otimes\rho_t\quad(t\in\mathbb{R}),
\end{eqnarray*}
where $\rho_t$ is the right translation defined by $(\rho_t\eta)(s):=\eta(s+t)$ ($\eta\in L^2(\mathbb{R})$). Hence we have
\begin{eqnarray*}
(M\rtimes_{\sigma^\phi} \mathbb{R})'=\tilde{J}(M\rtimes_{\sigma^\phi} \mathbb{R})\tilde{J}&=&W^*\{JxJ\otimes 1\ (x\in M),\ \Delta^{it}\otimes\rho_t\ (t\in\mathbb{R})\}\\
&=&W^*\{M'\otimes 1,\ \Delta^{it}\otimes\rho_t\ (t\in\mathbb{R})\}.
\end{eqnarray*}

Next, we investigate how Cartan subalgebras of $M$ behave in the continuous core of $M$. Let $M$ be a general von Neumann algebra, $A\subset M$ a Cartan subalgebra of $M$, and let $E_A$ be an associated conditional expectation. Take a faithful normal state $\phi$ on $A$ and extend it on $M$ via $E_A$ (still denote it by $\phi$). 
Then by the proof of Takesaki's conditional expectation theorem $\cite[\rm Theorem\ IX.4.2]{Tak 2}$, the restriction of $\sigma_t^\phi$ on $A$ coincides with the modular automorphism group on $A$ associated with $\phi$. This implies $\sigma^\phi_t(A)=A$ so that we have a natural inclusion $A\rtimes_{\sigma^\phi} \mathbb{R}\subset M\rtimes_{\sigma^\phi} \mathbb{R}$. 
Since $A$ is abelian (and so $\phi$ is tracial), $\sigma^\phi_t=\mathrm{id}_A$ on $A$ and hence we have $A\rtimes_{\sigma^\phi} \mathbb{R}=A\otimes L\mathbb{R}$. Then it is known that for any Tr-finite projection $p\in L\mathbb{R}$, the reduced subalgebra $A\otimes pL\mathbb{R}p$ is a Cartan subalgebra of a finite von Neumann algebra $p(M\rtimes_{\sigma^\phi} \mathbb{R})p$ (e.g. $\cite[\rm Propositions\ 2.6 \ and \ 2.7]{HR}$ and $\cite[\rm Lemma\ 2.2]{FSW10}$).

\subsection{\bf Popa's intertwining techniques}

As explained in Introduction, Sorin Popa introduced a useful tool which gives a good sufficient condition for unitary conjugacy of Cartan subalgebras. Here we recall only the precise statement which we need later. See $\cite[\rm Theorem\ F.12]{BO}$ for another proof.

\begin{Thm}[{\cite{Po 1}\cite{Po 2}}]\label{em thm}
Let $M$ be a finite von Neumann algebra with separable predual, $\tau$ a faithful normal trace on $M$, and let $A, B\subset M$ be (possibly non-unital) von Neumann subalgebras. We denote by $E_B$ the unique $\tau_B$-preserving conditional expectation from $1_BM1_B$ onto $B$, where $\tau_B:=\tau(1_B\cdot1_B)/\tau(1_B)$. Then the following conditions are equivalent.
\begin{itemize}
	\item[$(1)$]There exists no sequences $(w_n)_n$ of unitaries in $A$ such that  $\displaystyle\lim_n\|E_{B}(b^{\ast}w_na)\|_{2,\tau_B}=0$ for any $a,b\in 1_AM1_B$.
	\item[$(2)$]There exists a non-zero $A$-$B$-submodule $H$ of $1_AL^{2}(M,\tau)1_B$ such that $\dim_{(B,\tau_B)}H<\infty$.
	\item[$(3)$]There exist non-zero projections $e\in A$ and $f\in B$, a unital normal $\ast$-homomorphism $\theta \colon eAe \rightarrow fBf$, and a partial isometry $v\in M$ such that
\begin{itemize}
\item[$\bullet$] $vv^{\ast}\leq e$ and $v^{\ast}v\leq f$,
\item[$\bullet$] $v\theta(x)=xv$ for any $x\in eAe$.
\end{itemize}
\end{itemize}
We write as $A\preceq_{M}B$ if one of these conditions holds.
\end{Thm}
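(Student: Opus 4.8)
Since conditions $(1)$--$(3)$ are Popa's intertwining-by-bimodules criterion, the plan is to reprove the cyclic chain $(1)\Rightarrow(2)\Rightarrow(3)\Rightarrow(1)$ following $\cite{Po 1}$, $\cite{Po 2}$ (see also $\cite[\rm Theorem\ F.12]{BO}$), working throughout inside the Jones basic construction $\langle M,e_B\rangle$ of $B\subset 1_BM1_B$ acting on $L^2(M,\tau)$, equipped with its canonical semifinite trace $\mathrm{Tr}$ (normalized by $\mathrm{Tr}(e_B)=1$, so that $\mathrm{Tr}(xe_By)$ is proportional to $\tau(xy)$). Two standard facts will be used repeatedly: $\langle M,e_B\rangle=(JBJ)'$, so any projection $q\in\langle M,e_B\rangle$ automatically commutes with the right $B$-action and $\dim_{(B,\tau_B)}qL^2(M)=\mathrm{Tr}(q)$; and $e_B\le J1_BJ$ (since, in the tracial picture, $J1_BJ$ acts as the identity on $L^2(B)$), which keeps everything inside $1_A(\,\cdot\,)1_B$. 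The one persistent nuisance is that $A$ and $B$ may be non-unital; I absorb this by systematically cutting operators by $1_A$ and $1_B$, noting in particular that in $(3)$ one automatically has $v\in 1_AM1_B$ because $vv^{*}\le e\le 1_A$ and $v^{*}v\le f\le 1_B$, so $ev=v=vf$.

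\emph{$(1)\Rightarrow(2)$ (the main analytic step).} Using separability of $M_{*}$ and a diagonal argument, the hypothesis $(1)$ upgrades to a uniform estimate: there are $\delta>0$ and a finite set $F\subset 1_AM1_B$ with $\sum_{a,b\in F}\|E_B(b^{*}wa)\|_{2,\tau_B}^{2}\ge\delta$ for every $w\in\mathcal U(A)$. Putting $\mathbf e:=\sum_{a\in F}ae_Ba^{*}\in\langle M,e_B\rangle_{+}$, a direct computation (cyclicity of $\mathrm{Tr}$ together with $e_Bxe_B=E_B(x)e_B$) rewrites the left-hand side as a fixed positive multiple of $\mathrm{Tr}(w\mathbf e w^{*}\mathbf e)$. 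Let $\mathcal C$ be the weak closure of $\operatorname{conv}\{w\mathbf e w^{*}:w\in\mathcal U(A)\}$; it is a weakly compact convex subset of the $\mathrm{Tr}$-finite positive cone, invariant under $\mathrm{Ad}(\mathcal U(A))$, and its unique element $\mathbf e_0$ of minimal $\|\cdot\|_{2,\mathrm{Tr}}$-norm is therefore fixed by every $\mathrm{Ad}(u)$, $u\in\mathcal U(A)$, hence lies in $A'\cap\langle M,e_B\rangle$; it is non-zero because $\mathrm{Tr}(\mathbf e_0\mathbf e)\ge\delta$. A suitable non-zero spectral projection $q$ of $\mathbf e_0$ then satisfies $q\in A'\cap\langle M,e_B\rangle$ and $\mathrm{Tr}(q)<\infty$, so $H:=qL^2(M)$ is a non-zero $A$-$B$-subbimodule of $1_AL^2(M)1_B$ with $\dim_{(B,\tau_B)}H=\mathrm{Tr}(q)<\infty$, which is $(2)$. (Conversely $(2)\Rightarrow(1)$ is immediate from the same dictionary: the projection $q$ onto such an $H$ is $\mathrm{Tr}$-finite and $A$-central, and approximating $q$ by finite sums $\sum ae_Ba^{*}$ would make any vanishing sequence $(w_n)$ contradict $0<\mathrm{Tr}(q)=\mathrm{Tr}(w_nqw_n^{*}q)$.)

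\emph{$(2)\Rightarrow(3)$ and $(3)\Rightarrow(1)$.} For $(2)\Rightarrow(3)$ I would realize the finite right $B$-module $H$ concretely as $P\bigl(L^2(B)^{\oplus n}\bigr)$ for some $n<\infty$ and a projection $P\in\mathbb M_n(B)$, transport the left $A$-action — unital, since $1_A$ acts as the identity on $H\subset 1_AL^2(M)$ — to a normal $*$-homomorphism $\psi\colon A\to P\mathbb M_n(B)P$, pick out of $H\subset L^2(M)$ the vectors $\xi_i$ corresponding to $P(\widehat{1_B}\otimes\delta_i)$, and extract the data $e,f,\theta,v$ by polar decomposition of (a corner of) the intertwiner assembled from the $\xi_i$. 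Making this extraction rigorous — verifying that the $\xi_i$ are right-$B$-bounded vectors, so that the partial isometry genuinely lands in $M$ and the induced map $\theta\colon eAe\to fBf$ is a $*$-homomorphism with $v\theta(x)=xv$ — is the fiddliest point of the whole proof and is exactly where I would lean on $\cite{Po 1}$. Finally $(3)\Rightarrow(1)$ is a short computation: first shrink $f$ to the support of $c:=E_B(v^{*}v)$ in $fBf$ and replace $\theta$ by $x\mapsto f\theta(x)f$ (one checks, using $v\theta(x)=xv$ and faithfulness of $E_B$, that this support projection commutes with $\theta(eAe)$, so $(3)$ still holds and now $c$ is faithful on $fBf$). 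If some $(w_n)\subset\mathcal U(A)$ had $\|E_B(b^{*}w_na)\|_{2,\tau_B}\to 0$ for all $a,b\in 1_AM1_B$, then with $a=b=v$ the relations $ev=v=vf$ and $v\theta(x)=xv$ give $v^{*}w_nv=v^{*}v\,\theta(ew_ne)$, hence $E_B(v^{*}w_nv)=c\,\theta(ew_ne)$, so $\|c\,\theta(ew_ne)\|_{2,\tau_B}\to0$; since $e$ is central in the (abelian) $A$ relevant here, $\theta(ew_ne)^{*}\theta(ew_ne)=\theta(e)=f$, whence $\tau_B\bigl(\theta(ew_ne)\theta(ew_ne)^{*}\bigr)=\tau_B(f)$ is a fixed positive number, while faithfulness of $c$ forces $\tau_B\bigl(\theta(ew_ne)\theta(ew_ne)^{*}\bigr)\to0$, a contradiction. (For general non-abelian $A$ one instead runs this step inside $\langle M,e_B\rangle$ using the $\mathrm{Tr}$-finite positive element $ve_Bv^{*}$, which commutes with $eAe$, as in the references.) This closes the cycle and establishes the equivalence of $(1)$, $(2)$ and $(3)$.
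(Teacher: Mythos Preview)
The paper does not give its own proof of this theorem: it is stated as Popa's intertwining criterion with citations to \cite{Po 1}, \cite{Po 2} and a pointer to \cite[\rm Theorem\ F.12]{BO}, and is then used as a black box. So there is no in-paper proof to compare against; your sketch is essentially the standard argument from those references, and the overall architecture $(1)\Rightarrow(2)\Rightarrow(3)\Rightarrow(1)$ via the basic construction is correct.

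That said, your $(3)\Rightarrow(1)$ contains a genuine error. From $\|c\,\theta(ew_ne)\|_{2,\tau_B}\to 0$ and ``faithfulness of $c$'' you conclude $\tau_B\bigl(\theta(ew_ne)\theta(ew_ne)^{*}\bigr)\to 0$. This implication is false: having support projection $f$ does not make left multiplication by $c$ bounded below on $L^2$. What actually works in the abelian case is simpler and does not use faithfulness at all: if $\theta(ew_ne)$ is a unitary in $fBf$ then, by the trace property, $\|c\,\theta(ew_ne)\|_{2,\tau_B}^{2}=\tau_B\bigl(\theta(ew_ne)^{*}c^{2}\theta(ew_ne)\bigr)=\tau_B(c^{2})=\|c\|_{2,\tau_B}^{2}>0$, which already contradicts $\|c\,\theta(ew_ne)\|_{2,\tau_B}\to 0$. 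More seriously, the theorem is stated for arbitrary (not abelian) $A$, so $e$ need not be central and $ew_ne$ need not be unitary in $eAe$; your parenthetical about running the step with $ve_Bv^{*}$ in $\langle M,e_B\rangle$ is the right fix, but as written the main-text argument only covers a special case. The clean route, as in \cite[\rm Theorem\ F.12]{BO}, is to observe that $q:=ve_Bv^{*}$ is a nonzero $\mathrm{Tr}$-finite projection with $uqu^{*}=q$ for all $u\in\mathcal U(eAe)$, and then derive a uniform lower bound on $\sum\|E_B(b^{*}wa)\|_2^2$ for a suitable finite set $F$ directly from $\mathrm{Tr}(q)>0$ --- or, equivalently, close the cycle via $(3)\Rightarrow(2)$ (the bimodule $\overline{vL^2(B)}$) and your already-noted $(2)\Rightarrow(1)$.
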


In the statement of (2) above, we do not need to take $L^2(M,\tau)$. 
In fact, we can choose any standard representation of $M$, since all standard representations are canonically isomorphic with each other including the left and right actions of $M$ (and hence those of $A$ and $B$).

\subsection{\bf Weak amenability and $\rm \bf W^*$CBAP}\label{wk am}

Weak amenability is an approximation property for discrete groups (more generally, locally compact groups) weaker than amenability, and the $\rm W^*$CBAP is a corresponding notion for von Neumann algebras.

To introduce these notions, we first recall the definition of a Herz--Shur multiplier. Let $\Gamma$ be a discrete group and $\phi$ a map from $\Gamma$ to $\mathbb{C}$. Consider a linear map 
\begin{eqnarray*}
m_\phi\colon \mathbb{C}[\Gamma]\rightarrow\mathbb{C}[\Gamma];\ \sum_{s\in \Gamma}a_s\cdot s \mapsto \sum_{s\in \Gamma}\phi(s)a_s\cdot s.
\end{eqnarray*}
Then regarding $\mathbb{C}[\Gamma]\subset C_\lambda^*(\Gamma)$, we define the Herz--Shur norm of $\phi$ as $\|\phi\|_{\rm c.b.}:=\|m_\phi\|_{\rm c.b.}$ (possibly infinite). We say $\phi$ (or $m_\phi$) is a \textit{Herz--Shur multiplier} if $\|\phi\|_{\rm c.b.}$ is finite.

Then Recall that a discrete group $\Gamma$ is \textit{weakly amenable} if there exists a net $(\phi_i)_i$ of finitely supported Herz--Shur multipliers satisfying $\limsup_i\|\phi_i\|_{\rm c.b.}<\infty$ and $\phi_i(g)\rightarrow 1$ as $i\rightarrow \infty$ for any $g\in \Gamma$.
We also recall that a von Neumann algebra $M$ has the $\it weak^*$ \textit{completely approximation property} (or $\ W^*$\textit{CBAP}, in short) if there exists a net $(\psi_i)_i$ of normal c.b.\ maps on $M$ with finite rank such that $\limsup_i\|\psi_i\|_{\rm c.b.}<\infty$ and $\psi_i$ converges to $\mathrm{id}_M$ in the point $\sigma$-weak topology.

Then optimal constants
\begin{eqnarray*}
\Lambda_{\rm c.b.}(\Gamma):=\inf\{\ \limsup_i\|\phi_i\|_{\rm c.b.}\mid (\phi_i) \textrm{ satisfies the above condition}\}\\
\Lambda_{\rm c.b.}(M):=\inf\{\ \limsup_i\|\psi_i\|_{\rm c.b.}\mid (\psi_i) \textrm{ satisfies the above condition}\}
\end{eqnarray*}
are invariants of $\Gamma$ and $M$ respectively, both of which are called the \textit{Cowling--Haagerup constant}. 
It is known that $\Lambda_{\rm c.b.}(\Gamma)=\Lambda_{\rm c.b.}(L\Gamma)$ (see for example $\cite[\rm Section\ 12.3]{BO}$).
Recently combined with an approximation property result of Brannan $\cite{Br12}$, Freslon proved that $\Lambda_{\rm c.b.}(L^\infty(\mathbb{G}))=1$, where $\mathbb{G}$ is monoidally equivalent to $A_o(1_n)$ or $A_u(1_n)$ $\cite{Fr12}$. However the general case is still open.

We will use these properties in two ways: one is Theorem $\ref{net}$ to get weakly compact actions; the other is as follows with local reflexivity. Recall a $C^*$-algebra $A$ is \textit{locally reflexive} if for any finite dimensional subspace $E\subset A^{**}$, there exists a net $(\mu_j)_j$ of c.c.\ maps from $E$ to A such that $(\mu_j)_j$ converges to $\mathrm{id}_E$ in the point $\sigma$-weak topology.
\begin{Lem}\label{ao w}
Let $M$ be a von Neumann algebra and $A\subset M$ a $\sigma$-weakly dense $C^*$-subalgebra. Let $(\phi_i)_i$ be a net of normal c.b.\ maps on $M$ with finite rank such that $\limsup_i\|\phi_i\|_{\rm c.b.}=:k<\infty$ and $\phi_i$ converges ${\rm id}_M$ in the point $\sigma$-weak topology. Assume $A$ is locally reflexive. Then we can find a net $(\psi_j)_j$ of normal c.b.\ maps from $M$ into $A$ with finite rank satisfying the same conditions as $(\phi_i)_i$.
\end{Lem}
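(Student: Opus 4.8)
The plan is to compose each $\phi_i$ with completely contractive maps into $A$ supplied by local reflexivity — which is applicable precisely because every $\phi_i$ has finite rank — and then reindex the resulting family to produce the net $(\psi_j)_j$.

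First I would set up the ambient picture. Since $A$ is $\sigma$-weakly dense in $M$, the inclusion $A\hookrightarrow M$ extends to a normal surjective $*$-homomorphism $\pi\colon A^{**}\to M$, whose kernel is $(1-z)A^{**}$ for a central projection $z\in A^{**}$; thus $M$ is identified with the corner $zA^{**}$, and accordingly $M_*$ is identified with $z\cdot A^*\subset A^*=(A^{**})_*$. In particular, for any finite-dimensional subspace $E\subset M$, viewed inside $A^{**}$, local reflexivity gives a net of complete contractions $E\to A$ converging to the inclusion $E\hookrightarrow A^{**}$ in the point-$\sigma$-weak topology $\sigma(A^{**},A^*)$, hence in particular pointwise against every $\omega\in M_*$.

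Next, for each $i$ put $E_i:=\phi_i(M)$, a finite-dimensional subspace of $M$, and let $(\mu_{i,j})_j$ be a net of complete contractions $E_i\to A$ with $\langle\mu_{i,j}(e),\omega\rangle\to\langle e,\omega\rangle$ for all $e\in E_i$ and $\omega\in M_*$. Set $\psi_{i,j}:=\mu_{i,j}\circ\phi_i\colon M\to A$. Writing $\phi_i(x)=\sum_k\omega_k(x)e_k$ with $\omega_k\in M_*$ and $e_k\in E_i$ (possible since $\phi_i$ is normal and of finite rank), we get $\psi_{i,j}(x)=\sum_k\omega_k(x)\mu_{i,j}(e_k)$, so each $\psi_{i,j}$ is normal, of finite rank, and has range in $A$; moreover $\|\psi_{i,j}\|_{\rm c.b.}\le\|\mu_{i,j}\|_{\rm c.b.}\|\phi_i\|_{\rm c.b.}\le\|\phi_i\|_{\rm c.b.}$, and for fixed $x\in M$, $\omega\in M_*$ one has $\langle\psi_{i,j}(x),\omega\rangle\to\langle\phi_i(x),\omega\rangle$ as $j\to\infty$, because $\phi_i(x)\in E_i$.

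Finally I would assemble the net by a standard $\varepsilon$-net argument: index by triples $(F,G,\varepsilon)$ with $F\subset M$ finite, $G\subset M_*$ finite and $\varepsilon>0$, directed by inclusion and reverse order on $\varepsilon$. Given such a triple, pick $i$ far enough in the original net so that $\|\phi_i\|_{\rm c.b.}<k+\varepsilon$ and $|\langle\phi_i(x)-x,\omega\rangle|<\varepsilon/2$ for all $x\in F$, $\omega\in G$ (possible since $\limsup_i\|\phi_i\|_{\rm c.b.}=k$ and $\phi_i\to\mathrm{id}_M$ point-$\sigma$-weakly), then pick $j$ so that $|\langle\psi_{i,j}(x)-\phi_i(x),\omega\rangle|<\varepsilon/2$ on $F\times G$, and set $\psi_{(F,G,\varepsilon)}:=\psi_{i,j}$. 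The triangle inequality gives $\|\psi_{(F,G,\varepsilon)}\|_{\rm c.b.}<k+\varepsilon$ and $|\langle\psi_{(F,G,\varepsilon)}(x)-x,\omega\rangle|<\varepsilon$ on $F\times G$, so $(\psi_{(F,G,\varepsilon)})$ is a net of normal finite-rank completely bounded maps into $A$ with $\psi_{(F,G,\varepsilon)}\to\mathrm{id}_M$ in the point-$\sigma$-weak topology and $\limsup\|\psi_{(F,G,\varepsilon)}\|_{\rm c.b.}\le k$. The normality and c.b.-norm bookkeeping for the compositions is routine; the only delicate point — and what I would treat as the crux — is the identification $M=zA^{**}$ with its consequence $M_*\subset A^*$, since that is exactly what lets the point-$\sigma$-weak convergence furnished by local reflexivity, a priori tested only against $A^*$, be used against the normal functionals on $M$ that we actually need to control.
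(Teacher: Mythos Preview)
Your proof is correct and follows essentially the same route as the paper: identify $M$ with $zA^{**}$ via the central projection $z$, apply local reflexivity to the finite-dimensional ranges $E_i=\phi_i(M)\subset A^{**}$, and compose the resulting complete contractions with the $\phi_i$. Your write-up is in fact more explicit than the paper's (you spell out why $M_*\subset A^*$ makes the $\sigma(A^{**},A^*)$-convergence suffice, and you carry out the reindexing), whereas the paper records the composition $\tilde\mu^i_j\circ\phi_i$ with $\tilde\mu^i_j(a)=z\mu^i_j(a)$ and leaves the rest to the reader.
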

\begin{proof}
Let $z\in A^{**}$ be the central projection satisfying $M\simeq zA^{**}$.
Put $E_i:=\phi_i(M)$ and regard as a subset of $A^{**}$ via $E_i\subset M\simeq zA^{**}$. Then, by local reflexivity of $A$, we can find a net $(\mu^i_j)_j$ of c.c.\ maps from $E_i$ into $A$ such that $\mu^i_j$ converges ${\rm id}_{E_{i}}$ in the point $\sigma$-weak topology. 
Now, putting $\tilde{\mu}^i_j(a):=z\mu^i_j(a)$, we have a net $(\tilde{\mu}^i_j\circ\phi_i)_{i,j}$ of c.b.\ maps from $M$ into $zA$ and this makes our desired net by using the identification $(A\subset M) \simeq (zA\subset zA^{**})$. 
\end{proof}
\subsection{\bf Universal discrete quantum groups}\label{uni dis}
In the paper, we use the quantum group theory only for Propositions $\ref{quan ao+}$ and $\ref{quan aoc+}$. We accept all the the basics of compact and discrete quantum groups and we refer the reader to $\cite{Wo}$ and $\cite{MV}$ for the details. Our notations are very similar to those of $\cite{VVe}$.

Let $C(\mathbb{G})$ be a compact quantum group. We denote by $\Phi$ the comultiplication, by $h$ the Haar state, and by $L^2(\mathbb{G})$ the GNS-representation of $h$. Then the Hilbert space $L^2(\mathbb{G})$ can be decomposed as follows:
\begin{equation*}
L^2(\mathbb{G})=\sum_{x\in\mathrm{Irred}(\mathbb{G})}\oplus (H_x\otimes H_{\bar{x}}),
\end{equation*}
where $\mathrm{Irred}(\mathbb{G})$ is the set of equivalent classes of all irreducible unitary representations of $\mathbb{G}$ and $\bar{x}$ is the contragredient of $x$. Let $t_x$ be the unique unit vector (up to multiplication by $\mathbb{T}$) in $H_x\otimes H_{\bar{x}}$ such that $(U^x\boxtimes U^{\bar{x}})$-invariant, where $U^x$ is the unitary element corresponding to $x$. Identify $t_x$ as an anti-linear map from $H_{\bar{x}}$ to $H_x$ with the Hilbert--Schmidt correspondence. Then we have two representations
\begin{alignat*}{5}
\rho&\colon& C(\mathbb{G})\rightarrow \mathbb{B}(L^2(\mathbb{G})) &;&\ \rho(\omega_{\eta,\xi}\otimes\iota(U^x))\Omega=\xi\otimes t^{\bar{x}}\eta\in H_x\otimes H_{\bar{x}},\\
\lambda&\colon& C(\mathbb{G})\rightarrow \mathbb{B}(L^2(\mathbb{G})) &;&\ \lambda(\omega_{\eta,\xi}\otimes\iota(U^x))\Omega=t^{\bar{x}}\eta\otimes\xi \in H_{\bar{x}}\otimes H_x,
\end{alignat*}
for all $x\in \mathrm{Irred}(\mathbb{G})$ and $\xi, \eta\in H_x$. Here $\Omega$ is the canonical cyclic vector. We note that these representations are unitarily equivalent to the GNS-representation for the Haar state $h$.
Define the dual discrete quantum group as 
\begin{alignat*}{5}
c_0(\hat{\mathbb{G}})&:=&\bigoplus_{x\in\mathrm{Irred}(\mathbb{G})}\mathbb{B}(H_x)&,&\\
\ell^\infty(\hat{\mathbb{G}})&:=&\prod_{x\in\mathrm{Irred}(\mathbb{G})}\mathbb{B}(H_x)&,&
\end{alignat*}
and define two representations of them on the same Hilbert space $L^2(\mathbb{G})$ by
\begin{alignat*}{5}
\hat{\lambda}&\colon&\ell^\infty(\hat{\mathbb{G}})\rightarrow \prod_{x\in\mathrm{Irred}(\mathbb{G})}\mathbb{B}(H_x)\otimes\mathbb{C}\subset\mathbb{B}(L^2(\mathbb{G}))&,&\\
\hat{\rho}&\colon&\ell^\infty(\hat{\mathbb{G}})\rightarrow \prod_{\bar{x}\in\mathrm{Irred}(\mathbb{G})}\mathbb{C}\otimes\mathbb{B}(H_x)\subset\mathbb{B}(L^2(\mathbb{G}))&.&
\end{alignat*}
All dual objects are written with hat (e.g. $\hat{\Phi}$, $\hat{h}$). 
{We have a natural unitary} 
\begin{equation*}
\mathbb{V}=\bigoplus_{x\in\mathrm{Irred}\mathbb{G}} U^x.
\end{equation*}


From now on, we assume that the Haar state $h$ is faithful on $C(\mathbb{G})$ and  recall modular objects of them. We use similar notations to $\cite{To}$ which has a good survey of the modular theory on compact quantum groups. Let $A(\mathbb{G})$ be the dense Hopf $*$-algebra of $C(\mathbb{G})$, $\kappa$ the antipode, and let $\epsilon$ be the counit of $C(\mathbb{G})$. 
Let $\{f_z\}_z$ ($z\in \mathbb{C}$) be the Woronowicz characters on $C(\mathbb{G})$, that is, a family of homomorphisms from $A(\mathbb{G})$ to $\mathbb{C}$ satisfying  conditions in $\cite[\rm Theorem\ 1.4]{Wo}$. Put $F_x:=(\iota\otimes f_1)(U^x)$ for $x\in\mathrm{Irred}(\mathbb{G})$. Then we have the following useful relations to the modular group associated with the Haar state $h$:
\begin{itemize}
\item $(\iota\otimes\sigma_t^{h})(U^x)=(F_x^{it}\otimes1)U^x(F_x^{it}\otimes1)\quad (t\in\mathbb{R}, x\in\mathrm{Irred}(\mathbb{G}))$,
\item $\displaystyle\Delta^{it}=\sum_{x\in\mathrm{Irred}(\mathbb{G})}\oplus (F_x^{it}\otimes F_{\bar{x}}^{-it})$ \quad on $\displaystyle L^2(\mathbb{G})=\sum_{x\in\mathrm{Irred}(\mathbb{G})}\oplus (H_x\otimes H_{\bar{x}})$.
\end{itemize}
We denote the scaling automorphism group by $\tau_t$ and the unitary antipode by $R$. Define a conjugate unitary $\hat{J}$ on $L^2(\mathbb{G})$ by $\hat{J}x\hat{1}:=R(x^*)\hat{1}$ for $x\in C(\mathbb{G})$ and put $U:=J\hat{J}=\hat{J}J$. 
Then we can identify all compact quantum group $C^*$-algebras as these opposite algebras.

Next we recall universal discrete quantum groups introduced in $\cite{VW96}$ which are our main objects. Let $F$ be an element in $\mathrm{GL}(n,\mathbb{C})$ $(n\geq 2)$. Then the  $C^*$-algebra $C(A_u(F))$ (respectively, $C(A_o(F))$ for $F\bar{F}=\pm1$) is defined as the universal unital $C^*$-algebra generated by all the entries of a unitary $n$ by $n$ matrix $u=(u_{i,j})_{i,j}$ satisfying 
\begin{itemize}
\item $F\bar{u}F^{-1}$ is unitary, (respectively, $F\bar{u}F^{-1}=u$).
\end{itemize}
where $\bar{u}=(u_{i,j}^*)_{i,j}$. 
Following $\cite{VVe}$ and $\cite{VV08}$, we treat only the case $n\geq3$. 

Put $\mathbb{G}:=A_o(F)$ or $A_u(F)$. Then $C(\mathbb{G})_{\rm red}$ is defined as an image of $C(\mathbb{G})$ in $\mathbb{B}(L^2(\mathbb{G}))$ via the GNS-representation and it is still a compact quantum group (with the haar state faithful). Write $L^\infty(\mathbb{G}):=C(\mathbb{G})_{\rm red}''$. Since previous two representations $\lambda$ and $\rho$ are unitarily equivalent, we naturally have 
\begin{equation*}
C(\mathbb{G})_{\rm red}\simeq\lambda(C(\mathbb{G}))\simeq\rho(C(\mathbb{G})),\quad
L^\infty(\mathbb{G})\simeq\lambda(C(\mathbb{G}))''\simeq\rho(C(\mathbb{G}))''.
\end{equation*} 
We regard $\rho(C(\mathbb{G}))\subset\rho(C(\mathbb{G}))''\subset\mathbb{B}(L^2(\mathbb{G}))$ as our main objects and, in the next section, we will prove that they satisfy some conditions similar to condition (AO). We note that factoriality and these types were studied in $\cite{Ba97}$ and $\cite{VVe}$ (but not solved completely).

All the irreducible representations of $A_o(F)$ and $A_u(F)$ were completely classified in the following sense $\cite{Ba96}\cite{Ba97}$: $\mathrm{Irred}(A_o(F))$ is identified with $\mathbb{N}$ in such a way that
\begin{equation*}
x\otimes y\simeq |x-y|\oplus(|x-y|+2)\oplus\cdots\oplus(x+y) \quad(x,y\in \mathbb{N});
\end{equation*}
$\mathrm{Irred}(A_u(F))$ is identified with $\mathbb{N}*\mathbb{N}$ in such a way that
\begin{equation*}
x\otimes y\simeq \bigoplus_{z\in \mathbb{N}*\mathbb{N}, x=x_0z, {y=\bar{z}y_0}} x_0y_0 \quad(x,y\in \mathbb{N}*\mathbb{N}).
\end{equation*}
From now on, for simplicity, we treat only $A_o(F)$ and all the cases of $A_u(F)$ in this paper can be treated in the same way as that of $A_o(F)$ (see $\cite[\rm Section\ 5]{VV08}$). 

Let $z$ be any irreducible representation contained in $x\otimes y$ as a subrepresentation (write as $z\in x\otimes y$). Let $p^{x\otimes y}_z$ be the unique projection in $\mathbb{B}(H_x\otimes H_y)$ satisfying $(U^x \boxtimes U^y)(p^{x\otimes y}_z\otimes 1)\simeq U^z$.
Then take an intertwiner $V(x\otimes y,z)$ between $(U^x\boxtimes U^y)(p^{x\otimes y}_z\otimes 1)$ and $U^z$ and it is unique up to multiplication by $\mathbb{T}$. Define a u.c.p.\ map $\psi_{x+y,x}\colon \mathbb{B}(H_x)\rightarrow\mathbb{B}(H_{x+y})$ by
\begin{equation*}
\psi_{x+y,x}(A):=V(x\otimes y,x+y)^*(A\otimes1)V(x\otimes y,x+y),
\end{equation*}
and note that this map does not depend on the choice of $V(x\otimes y,x+y)$. 

Finally we recall a nuclear $C^*$-subalgebra $\cal B$ of $\ell^\infty(\hat{\mathbb{G}})$ which plays a significant role for us. We first put 
\begin{equation*}
{\cal B}_0:=\{a\in \ell^\infty(\hat{\mathbb{G}})\mid \textrm{there exists $x$ such that }ap_y=\psi_{y,x}(ap_x) \textrm{ for all } y\geq x \}.
\end{equation*}
Let $\pi\colon \mathbb{B}(L^2(\mathbb{G}))\rightarrow\mathbb{B}(L^2(\mathbb{G}))/\mathbb{K}(L^2(\mathbb{G}))$ be the quotient map.
In $\cite{VVe}$, Vaes and Vergnioux proved that 
\begin{itemize}
	\item the norm closure ${\cal B}$ of ${\cal B}_0$ is a $C^*$-algebra containing $c_0(\hat{\mathbb{G}})$ so that the $C^*$-algebra ${\cal B}_\infty:={\cal B}/c_0(\hat{\mathbb{G}})$ is defined;
	\item $\cal B$ and ${\cal B}_\infty$ are nuclear;
	\item $\hat{\Phi}$ induces a left action of $\hat{\mathbb{G}}$ on $\cal B$ and ${\cal B}_\infty$;
	\item this left action on ${\cal B}_\infty$ is amenable so that ${\cal B}_\infty\rtimes_r\hat{\mathbb{G}}$ is nuclear and ${\cal B}_\infty\rtimes_r\hat{\mathbb{G}}={\cal B}_\infty\rtimes_{\rm full}\hat{\mathbb{G}}$;
	\item $\hat{\Phi}$ induces the trivial right action of $\hat{\mathbb{G}}$ on ${\cal B}_\infty$ so that ${\hat{\lambda}({\cal B}_\infty)}$ commutes with $\pi\circ\lambda(C(\mathbb{G}))$, where we identify ${\hat{\lambda}}$ as a map from $\ell^\infty(\mathbb{G})/c_0(\mathbb{G})$ to $\mathbb{B}(L^2(\mathbb{G}))/\mathbb{K}(L^2(\mathbb{G}))$.
\end{itemize}
Since {$\hat{\lambda}\colon{\cal B}_\infty\rightarrow \mathbb{B}(L^2(\mathbb{G}))/\mathbb{K}(L^2(\mathbb{G}))$ and $(\iota \otimes \pi\circ\rho)(\mathbb{V})\in M (c_0(\hat{\mathbb{G}})\otimes \mathbb{B}/\mathbb{K})$}
are a covariant representation for the left action $\hat{\Phi}$, we have the following $*$-homomorphism
\begin{equation*}
{\pi_l (=\hat{\lambda}\rtimes \pi\circ\rho)}\colon {\cal B}_\infty\rtimes_r\hat{\mathbb{G}}={\cal B}_\infty\rtimes_{\rm full}\hat{\mathbb{G}}\longrightarrow \mathbb{B}(L^2(\mathbb{G}))/\mathbb{K}(L^2(\mathbb{G}))
\end{equation*}
by universality. Putting $\pi_r:=\mathrm{Ad}U\circ\pi_l$, where $U=J\hat{J}$, we have the following algebraic $*$-homomorphism
\begin{alignat*}{5}
\pi_l\times\pi_r\colon &({\cal B}_\infty\rtimes_r\hat{\mathbb{G}})\odot ({\cal B}_\infty\rtimes_r\hat{\mathbb{G}})&\longrightarrow& \mathbb{B}(L^2(\mathbb{G}))/\mathbb{K}(L^2(\mathbb{G}))&\\
&\hspace{4.2em}a\otimes b&\longmapsto &\hspace{2.4em}\pi_l(a)\pi_r(b),&
\end{alignat*}
since ${\hat{\lambda}({\cal B}_\infty)}$ commutes with $\pi\circ\lambda(C(\mathbb{G}))$. Here $\odot$ means the algebraic tensor product. 
By nuclearity of ${\cal B}_\infty\rtimes_r\hat{\mathbb{G}}$, this map is min-bounded and the restriction of the map on $(\mathbb{C}\rtimes_r\hat{\mathbb{G}})\otimes (\mathbb{C}\rtimes_r\hat{\mathbb{G}})\simeq C(\mathbb{G})_{\rm red}\otimes C(\mathbb{G})_{\rm red}$ gives the min-boundedness of the multiplication map on $C(\mathbb{G})_{\rm red}$ after taking the quotient with $\mathbb{K}(L^2(\mathbb{G}))$. This is the proof of the fact that $L^\infty(\mathbb{G})$ satisfies condition (AO) given in $\cite{VVe}$. 

We note that the multiplication map from $C(\mathbb{G})_{\rm red}\otimes C(\mathbb{G})_{\rm red}$ to  $\mathbb{B}(L^2(\mathbb{G}))/\mathbb{K}(L^2(\mathbb{G}))$ is nuclear, since so is $({\cal B}_\infty\rtimes_r\hat{\mathbb{G}})\otimes ({\cal B}_\infty\rtimes_r\hat{\mathbb{G}})$ (and hence is $\pi_l\times\pi_r$). We will use this observation in the next section.

We finally mention that condition (AO) for $L^\infty(A_o(F))$ and $L^\infty(A_u(F))$ were first observed by Vergnioux in $\cite{Ve05}$.  

\section{\bf Conditions Similar to Ozawa's Condition (AO)}\label{sim ao}

In this section, we introduce some similar conditions to condition (AO). We will prove that von Neumann algebras of $A_o(F)$ and $A_u(F)$ satisfy these conditions.

\subsection{\bf Condition $\rm {\bf (AO)}^+$}

Let us first recall Ozawa's condition (AO). 
We say a von Neumann algebra $M\subset \mathbb{B}(H)$ satisfies $\it condition$ (AO) if there exist $\sigma$-weakly dense unital $C^*$-subalgebras $A\subset M$ and $B\subset M'$ such that 
\begin{itemize}
\item[$(\rm{i})$] $A$ is locally reflexive;
\item[$(\rm{ii})$] the multiplication map $\nu\colon A\odot B \rightarrow \mathbb{B}(H)/\mathbb{K}(H);\ a\otimes b\mapsto ab+\mathbb{K}(H)$ is min-bounded.
\end{itemize}

In $\cite{solid 1}$, Ozawa proved his celebrated theorem: if a finite von Neumann algebra satisfies condition (AO), then it is solid. As we mentioned, solidity (or semisolidity) implies primeness for non-injective $\rm II_1$ factors. 

The most important examples of von Neumann algebras with condition (AO) comes from bi-exact groups $\cite[\rm Definition\ 15.1.2]{BO}$. In fact, Ozawa proved that they have the following characterization $\cite[\rm Lemma\ 15.1.4]{BO}$: a countable discrete group $\Gamma$ is bi-exact if and only if $\Gamma$ is exact and satisfies the following condition
\begin{itemize} 
\item  there exists a u.c.p.\ map $\theta\colon C_{\lambda}^*(\Gamma)\otimes C_{\rho}^*(\Gamma) \rightarrow \mathbb{B}(\ell^2(\Gamma))$ such that $\theta(a\otimes b)-ab\in \mathbb{K}(\ell^2(\Gamma))$ for any $a \in C_{\lambda}^*(\Gamma)$ and  $b \in C_{\rho}^*(\Gamma)$.
\end{itemize}
It is now obvious that the group von Neumann algebras of bi-exact groups satisfy condition (AO). Thus he proved that factors of bi-exact non-amenable i.c.c.\ groups are solid, in particular, prime.

Here is another significant view point. To see solidity, we do \textit{not} need the existence of a u.c.p. map $\theta$ above. We need only the property that the multiplication map $\nu$ is min-bounded after taking the quotient with $\mathbb{K}(\ell^2(\Gamma))$. This is why condition (AO) is weaker than bi-exactness for group von Neumann algebras.

On the other hand, in $\cite{PV12}$, Popa and Vaes proved that the group von Neumann algebras of bi-exact and weakly amenable groups are strongly solid. In the proof, they used such a u.c.p.\ map $\theta$ as an essential tool. 

Motivated these observation, we define the first condition similar to condition (AO) as follows. 
\begin{Def}\label{ao+}\upshape
Let $M\subset \mathbb{B}(H)$ be a von Neumann algebra with standard representation and denote by $J$ the modular conjugation. We say $M\subset \mathbb{B}(H)$ satisfies $\it condition$ $\rm {(AO)}^+$ if there exists a unital $\sigma$-weakly dense $C^*$-subalgebra $A$ such that 
\begin{itemize}
	\item[$(\rm{i})$] $A$ is locally reflexive;
	\item[$(\rm{ii})$] there exists a u.c.p.\ map $\theta\colon A\otimes JAJ \rightarrow \mathbb{B}(H)$ such that $\theta(a\otimes JbJ)-aJbJ\in \mathbb{K}(H)$ for any $a,b \in A$.
\end{itemize}
\end{Def}
\bigskip

The difference of conditions (AO) and $\rm (AO)^+$ is of course the existence of a u.c.p.\ map $\theta$.  So it may be useful to consider how we get such a $\theta$ for von Neumann algebras satisfying condition (AO). For this purpose, we translate the second condition as follows. 
\begin{itemize}
\item[($\rm ii'$)] The multiplication map $\nu$ is min-bounded and it has a u.c.p.\ lift, that is, there exists a u.c.p.\ map $\theta\colon A\otimes JAJ \rightarrow \mathbb{B}(H)$ such that $\nu=\pi\circ\theta$, where $\pi\colon \mathbb{B}(H)\rightarrow\mathbb{B}(H)/\mathbb{K}(H)$ is the quotient map.
\end{itemize}
With this trivial translation, we can apply lifting theorems in some concrete cases. For example, if $A$ is separable $C^*$-algebra and the multiplication map $\nu$ is nuclear, then $\nu$ has a u.c.p.\ lift by the lifting theorem due to Choi and Effros $\cite{CE}$. This method has been used by Ozawa (see the proof of $\cite[\rm Proposition\ 15.2.3]{BO}$).

Now combined with the observation in Subsection $\ref{uni dis}$, we can easily deduce that our main targets satisfy condition $\rm {(AO)}^+$.
\begin{Pro}\label{quan ao+}
Von Neumann algebras $L^\infty(A_o(F))$ and $L^\infty(A_u(F))$ for $F\in \mathrm{GL}(n,\mathbb{C})$ $(n\geq 3)$ satisfy Condition $\rm {(AO)}^+$.
\end{Pro}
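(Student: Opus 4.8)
The plan is to combine the structural facts about $A_o(F)$ (and $A_u(F)$) recalled in Subsection~\ref{uni dis} with the lifting principle spelled out in translation~($\rm ii'$). Concretely, I would take $A := \rho(C(\mathbb{G}))$, which is a unital $\sigma$-weakly dense $C^*$-subalgebra of $M := L^\infty(\mathbb{G}) = \rho(C(\mathbb{G}))''$ acting on $H := L^2(\mathbb{G})$. Since $\rho(C(\mathbb{G})) \simeq C(\mathbb{G})_{\rm red}$ is nuclear (being monoidally related to the exact, even nuclear, corepresentation algebra — in any case $C(\mathbb{G})_{\rm red}$ is exact), it is in particular locally reflexive, which gives condition~(i). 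For condition~(ii) — equivalently~($\rm ii'$) — I would invoke the Vaes--Vergnioux construction verbatim: the map $\pi_l \times \pi_r$ restricted to $(\mathbb{C}\rtimes_r\hat{\mathbb{G}})\otimes(\mathbb{C}\rtimes_r\hat{\mathbb{G}}) \simeq C(\mathbb{G})_{\rm red}\otimes C(\mathbb{G})_{\rm red}$ is exactly the multiplication map $\nu$ into $\mathbb{B}(L^2(\mathbb{G}))/\mathbb{K}(L^2(\mathbb{G}))$ after passing to the Calkin quotient, once one identifies $J A J$-side correctly via $\mathrm{Ad}\,U = \mathrm{Ad}(J\hat J)$ and the identification $JMJ = M'$.

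The one genuinely new ingredient over what is already in Subsection~\ref{uni dis} is upgrading min-boundedness of $\nu$ to the existence of a \emph{u.c.p.\ lift} $\theta$. Here I would use the observation flagged at the end of Subsection~\ref{uni dis}: the map $\pi_l\times\pi_r$ factors through the \emph{nuclear} $C^*$-algebra $({\cal B}_\infty\rtimes_r\hat{\mathbb{G}})\otimes({\cal B}_\infty\rtimes_r\hat{\mathbb{G}})$, so the multiplication map $\nu\colon C(\mathbb{G})_{\rm red}\otimes C(\mathbb{G})_{\rm red} \to \mathbb{B}(L^2(\mathbb{G}))/\mathbb{K}(L^2(\mathbb{G}))$ is nuclear. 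Since $C(\mathbb{G})_{\rm red}$ is separable, so is $C(\mathbb{G})_{\rm red}\otimes C(\mathbb{G})_{\rm red}$, and the Choi--Effros lifting theorem \cite{CE} (exactly the argument referenced in the proof of \cite[Proposition~15.2.3]{BO}) produces a u.c.p.\ lift $\theta$ of $\nu$. Transporting through the identification of the right-hand copy of $C(\mathbb{G})_{\rm red}$ with a $\sigma$-weakly dense $C^*$-subalgebra of $M' = JMJ$ — i.e.\ with $J A J$ up to the unitary $U$ — yields a u.c.p.\ map $\theta\colon A\otimes JAJ \to \mathbb{B}(H)$ with $\theta(a\otimes JbJ) - aJbJ \in \mathbb{K}(H)$, which is condition~(ii).

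The only delicate bookkeeping is making the right $C^*$-algebra sit inside $M'$ compatibly with the standard form: one must check that $\rho(C(\mathbb{G}))$ and $J\rho(C(\mathbb{G}))J$ (equivalently $U\rho(C(\mathbb{G}))U^* $, using $\pi_r = \mathrm{Ad}\,U\circ\pi_l$) commute modulo $\mathbb{K}(L^2(\mathbb{G}))$, which is precisely the last bullet point of Subsection~\ref{uni dis} (that $\hat\lambda({\cal B}_\infty)$ commutes with $\pi\circ\lambda(C(\mathbb{G}))$) together with $\lambda \simeq \rho$. I expect this identification of the opposite-algebra side — reconciling the abstract standard-form modular conjugation $J$ with the concrete quantum-group conjugations $\hat J$ and $U$ — to be the main (though purely routine) obstacle; the analytic content is entirely carried by nuclearity plus Choi--Effros and is already present in \cite{VVe}. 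For $A_u(F)$ the identical argument applies using \cite[Section~5]{VV08}.
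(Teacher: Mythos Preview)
Your proposal is correct and matches the paper's approach exactly: the paper states the proposition without a written-out proof, having just set up the pieces --- the nuclearity of the multiplication map $\nu$ (final paragraph of Subsection~\ref{uni dis}) together with the Choi--Effros lifting argument spelled out in translation~($\rm ii'$). One small correction: $C(\mathbb{G})_{\rm red}$ is \emph{not} nuclear in general (nuclearity would force the dual discrete quantum group to be amenable), but your hedge ``in any case $C(\mathbb{G})_{\rm red}$ is exact'' is the right statement --- exactness follows from the embedding into the nuclear ${\cal B}_\infty\rtimes_r\hat{\mathbb{G}}$, and exactness implies local reflexivity, which is all that condition~(i) requires.
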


\subsection{\bf A similar condition for continuous cores}\label{cond core}
To see strong solidity in the rigidity theory, finiteness assumption is essential since all the known proofs require the theory of amenable trace, which works only for finite von Neumann algebras. However our main targets $L^\infty(A_o(F))$ and $L^\infty(A_u(F))$ are hardly finite. So it is natural for us to see the continuous cores of such factors which are always semifinite. 

In this subsection, we investigate some conditions for continuous cores of general von Neumann algebras. The following condition is a natural analogue of condition $\rm (AO)^+$ for continuous cores.

\begin{Def}\label{aoc+}\upshape
Let $M$ be a von Neumann algebra, $\phi$ a faithful normal state on $M$, and let $\tilde{J}$ be the modular conjugation for $M\rtimes_{\sigma^\phi}\mathbb{R}\subset \mathbb{B}(L^2(M,\phi)\otimes L^2(\mathbb{R}))$. We say the pair $(M,\phi)$ satisfies \textit{condition $\rm {(AO)}^+$ with respect to its continuous core} (say \textit{condition $\rm (AOC)^+$}, in short) if there exists a $\sigma$-weakly dense unital $C^*$-subalgebra $A\subset M$ such that 
\begin{itemize}
	\item[$(\rm i)$] $\sigma^{\phi}$ defines a norm continuous action on $A$ (so that we can define $A\rtimes_r\mathbb{R}$);
	\item[$(\rm ii)$] $A\rtimes_r\mathbb{R}$ is locally reflexive;
	\item[$(\rm iii)$] there exists a u.c.p.\ map 
\begin{equation*}
\theta\colon (A\rtimes_r\mathbb{R})\odot \tilde{J}(A\rtimes_r\mathbb{R})\tilde{J} \longrightarrow \mathbb{B}(L^2(M,\phi)\otimes L^2(\mathbb{R}))
\end{equation*}
such that $\theta(a\otimes \tilde{J}b\tilde{J})-a\tilde{J}b\tilde{J}\in \mathbb{K}(L^2(M,\phi))\otimes\mathbb{B}(L^2(\mathbb{R}))$ for any $a,b\in A\rtimes_r\mathbb{R}$.
\end{itemize}
\end{Def}

Our goal in the subsection is to show that $A_o(F)$ and $A_u(F)$ with the Haar states satisfy this condition. For this, we investigate a sufficient condition for condition $\rm {(AOC)}^+$.

Let $M$ be a von Neumann algebra and $\phi$ a faithful normal state on $M$. Write $H:=L^2(M,\phi)$ and ${\cal K}:=\mathbb{K}(H)\otimes\mathbb{B}(L^2(\mathbb{R}))$  and let $J$ be the modular conjugation on $H$. Consider the multiplier algebra ${\cal L}:=M({\cal K})$ of ${\cal K}$ and denote 
${\cal C}:={\cal L/K}.$

Assume first that there exists a $\sigma$-weakly dense unital $C^*$-subalgebra $A\subset M$ such that 
\begin{itemize}
	\item[(a)] $\sigma^{\phi}$ defines a norm continuous action on $A$ (so that we can define $A\rtimes_r\mathbb{R}$).
\end{itemize}

Let $\pi$ be a $*$-homomorphism from $\mathbb{B}(H)$ into $\mathbb{B}(H\otimes L^2(\mathbb{R}))$ given by $(\pi(x)\xi)(t):=\Delta_\phi^{-it}x\Delta_\phi^{it}\xi(t)$ for $x\in \mathbb{B}(H)$ and $t\in \mathbb{R}$. Consider the $C^*$-algebra $D$ generated by
following elements
\begin{itemize}
\item $\pi(a)$, $JbJ\otimes1$ $(a,b\in A)$;
\item $1\otimes\lambda_t$, $\Delta_\phi^{it}\otimes\rho_{t}$ $(t\in\mathbb{R})$;
\item $\int_\mathbb{R} f(s)(1\otimes\lambda_s)\cdot ds$, $\int_\mathbb{R} f(s)(\Delta_\phi^{is}\otimes\rho_{s})\cdot ds$ $(f\in L^1(\mathbb{R}))$.
\end{itemize}
Then we assume that 
\begin{itemize}
	\item[(b)] $D$ is contained in $\cal L$.
\end{itemize}
In particular, we have natural maps from $A$ and $A^{\rm op}(=JAJ)$ to $\cal C$. We denote these maps by $\pi_l$ and $\pi_r$ respectively.

Next we assume that
\begin{itemize}
	\item[(c)] there exist separable nuclear $C^*$-algebras $C_l$ and $C_r$ containing $A$ and $A^{\rm op}$ respectively (so that $A$ is exact);
	 \item[(d)] there exist $*$-homomorphisms from $C_l$ and $C_r$ to $\cal C$ such that they are extensions of $\pi_l$ and $\pi_r$, respectively. We still denote them by $\pi_l$ and $\pi_r$.
\end{itemize}
Then we want to define the following $*$-homomorphism
\begin{alignat*}{5}
\nu\colon C_l\odot C_r \longrightarrow {\cal C} ;\ a\otimes b\mapsto \pi_l(a)\pi_r(b).
\end{alignat*}
However we do not know whether ranges of $C_l$ and $C_r$ commute, and hence we further assume that 
\begin{itemize}
\item[(e)] $\nu$ is a well-defined $*$-homomorphism, that is, $[\pi_l(a),\pi_r(b)]=0$  ($a\in C_l$, $b\in C_r$).
\end{itemize}
We can extend $\nu$ on $C_l\otimes C_r$ by the nuclearity. 
Restricting this map, we have a natural multiplication $*$-homomorphism
\begin{alignat*}{5}
\nu\colon A\otimes A^{\rm op} \longrightarrow {\cal C}\ ;\ 
&a\otimes1\ &\longmapsto& \ [\pi(a)],\\
&1\otimes a^{\rm op}\ &\longmapsto& \ [a^{\rm op}\otimes1].
\end{alignat*}
Next consider norm continuous $(\mathbb{R}\times\mathbb{R})$-actions on $A\otimes A^{\rm op}$ and $\nu(A\otimes A^{\rm op})$ given by
\begin{alignat*}{5}
&\mathbb{R}\times\mathbb{R} \longrightarrow \mathrm{Aut}(A\otimes A^{\rm op})\ &;&\ 
&s\otimes t\ &\longmapsto& \ \sigma_s^{\phi}\otimes\tilde{\sigma}_t^{\phi},\qquad\qquad\\
&\mathbb{R}\times\mathbb{R} \longrightarrow \mathrm{Aut}(\nu(A\otimes A^{\rm op}))\ &;&\ 
&s\otimes t\ &\longmapsto& \ \mathrm{Ad} ([1\otimes\lambda_s][\Delta_{\phi}^{it}\otimes\rho_t]).
\end{alignat*}
Here $\tilde{\sigma}_t^\phi(a^{\rm op})=\tilde{\sigma}_t^\phi(Ja^*J):=J\sigma_t^\phi (a^*)J=\sigma_t^\phi(a)^{\rm op}$. It is easily verified that $\nu$ is $(\mathbb{R}\times\mathbb{R})$-equivariant and hence we have the following $*$-homomorphism:
\begin{equation*}
\tilde{\nu}\colon A\rtimes_r\mathbb{R}\otimes \tilde{J}(A\rtimes_r\mathbb{R})\tilde{J}\simeq(A\otimes A^{\rm op})\rtimes_r(\mathbb{R}\times\mathbb{R}) \rightarrow (\nu(A\otimes A^{\rm op}))\rtimes_r(\mathbb{R}\times\mathbb{R})\rightarrow {\cal C}.
\end{equation*}
Here the continuity of the final map comes from the amenability of $\mathbb{R}\times\mathbb{R}$.
The resulting map says that the multiplication map on $A\rtimes_r\mathbb{R}\odot (A\rtimes_r\mathbb{R})^{\rm op}$ to ${\cal L}\subset\mathbb{B}(H\otimes L^2(\mathbb{R}))$ is min-bounded after taking the quotient with ${\cal K}$. Now the $C^*$-algebra $A\rtimes_r\mathbb{R}$ is exact (and hence locally reflexive) since so is $A$, and it is $\sigma$-weakly dense in $M\rtimes_{\sigma^\phi}\mathbb{R}$. At this point, $M\rtimes_{\sigma^\phi}\mathbb{R}$ satisfies a similar condition to condition (AO). 

Finally we assume that 
\begin{itemize}
\item[(f)] $\mathrm{Ad} ([1\otimes\lambda_s][\Delta_{\phi}^{it}\otimes\rho_t])\nu(C_l\otimes C_r)=\nu(C_l\otimes C_r)$ for any $(s,t)\in\mathbb{R}\times\mathbb{R}$ and this defines a norm continuous action on $\nu(C_l\otimes C_r)$.
\end{itemize}
In this case there exists a $*$-homomorphism from $\nu(C_l\otimes C_r)\rtimes_r(\mathbb{R}\times\mathbb{R})$ into $\cal C$ and hence the image of the map is nuclear. Since $\mathrm{ran} \tilde{\nu}$ is contained in this image, $\tilde{\nu}$ is a nuclear map into $\cal C$. Thus the lifting theorem of Choi and Effros is again applicable so that $\tilde{\nu}$ has a u.c.p.\ lift. Summary we have the following lemma.
\begin{Lem}
Let $M$ be a von Neumann algebra, $\phi$ a faithful normal state on $M$, and let $A\subset M$ be a $\sigma$-weakly dense unital $C^*$-subalgebra. If they satisfy all the conditions from $\rm(a)$ to $\rm(f)$, then $(M,\phi)$ satisfies condition $\rm (AOC)^+$.
\end{Lem}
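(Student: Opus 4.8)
The plan is to read off each of the three clauses of condition $\rm (AOC)^+$ (Definition \ref{aoc+}) from the hypotheses $\rm(a)$--$\rm(f)$, essentially assembling the construction carried out in the paragraphs immediately preceding the statement.

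Clause $\rm(i)$ of Definition \ref{aoc+} is verbatim hypothesis $\rm(a)$, so $A\rtimes_r\mathbb{R}$ is well defined. For clause $\rm(ii)$ I would argue that $A$ is exact: by $\rm(c)$ it embeds into the nuclear, hence exact, $C^*$-algebra $C_l$, and exactness passes to subalgebras; since $\mathbb{R}$ is amenable, $A\rtimes_r\mathbb{R}$ is then exact as well, and every exact $C^*$-algebra is locally reflexive. This is clause $\rm(ii)$.

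The content is clause $\rm(iii)$, the construction of the u.c.p.\ lift $\theta$. Following the discussion, I would first use $\rm(b)$ to obtain the $*$-homomorphisms $\pi_l\colon A\to\mathcal{C}$ and $\pi_r\colon A^{\rm op}\to\mathcal{C}$ into the Calkin-type algebra $\mathcal{C}=\mathcal{L}/\mathcal{K}$ with $\mathcal{L}=M(\mathcal{K})$, $\mathcal{K}=\mathbb{K}(H)\otimes\mathbb{B}(L^2(\mathbb{R}))$; then use $\rm(d)$ to extend these to the separable nuclear algebras $C_l\supset A$ and $C_r\supset A^{\rm op}$ of $\rm(c)$; then use $\rm(e)$ to see that $a\otimes b\mapsto\pi_l(a)\pi_r(b)$ is a well-defined $*$-homomorphism $\nu$ on $C_l\odot C_r$, which extends to $C_l\otimes C_r$ by nuclearity; and finally restrict to $A\otimes A^{\rm op}$ to recover the multiplication map $a\otimes1\mapsto[\pi(a)]$, $1\otimes a^{\rm op}\mapsto[a^{\rm op}\otimes1]$. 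The crucial check is that this restriction is equivariant for the norm-continuous actions $s\otimes t\mapsto\sigma_s^\phi\otimes\tilde{\sigma}_t^\phi$ on $A\otimes A^{\rm op}$ and $s\otimes t\mapsto\mathrm{Ad}([1\otimes\lambda_s][\Delta_\phi^{it}\otimes\rho_t])$ on $\nu(A\otimes A^{\rm op})$; this is a routine computation using the modular relations of Subsection \ref{TT}, in particular $(1\otimes\lambda_s)\pi(a)(1\otimes\lambda_{-s})=\pi(\sigma_s^\phi(a))$ and $(\Delta_\phi^{it}\otimes\rho_t)(JbJ\otimes1)(\Delta_\phi^{-it}\otimes\rho_{-t})=J\sigma_t^\phi(b)J\otimes1$, together with the fact that $\pi(A)$ commutes with $\tilde{J}(A\rtimes_r\mathbb{R})\tilde{J}$. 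Passing to reduced crossed products by the amenable group $\mathbb{R}\times\mathbb{R}$ and using $(A\otimes A^{\rm op})\rtimes_r(\mathbb{R}\times\mathbb{R})\simeq (A\rtimes_r\mathbb{R})\otimes\tilde{J}(A\rtimes_r\mathbb{R})\tilde{J}$ then produces $\tilde{\nu}\colon (A\rtimes_r\mathbb{R})\otimes\tilde{J}(A\rtimes_r\mathbb{R})\tilde{J}\to\mathcal{C}$ which, on the algebraic tensor product, is $a\otimes\tilde{J}b\tilde{J}\mapsto[a\tilde{J}b\tilde{J}]$.

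It remains to lift $\tilde{\nu}$. By $\rm(f)$, $\nu(C_l\otimes C_r)$ is invariant under $\mathrm{Ad}([1\otimes\lambda_s][\Delta_\phi^{it}\otimes\rho_t])$ with norm-continuous action, so there is a $*$-homomorphism $\nu(C_l\otimes C_r)\rtimes_r(\mathbb{R}\times\mathbb{R})\to\mathcal{C}$; its image is nuclear because $\nu(C_l\otimes C_r)$ is a quotient of the nuclear algebra $C_l\otimes C_r$ and $\mathbb{R}\times\mathbb{R}$ is amenable, and $\mathrm{ran}\,\tilde{\nu}$ lies inside that image. Hence $\tilde{\nu}$ is a nuclear map from the separable $C^*$-algebra $(A\rtimes_r\mathbb{R})\otimes\tilde{J}(A\rtimes_r\mathbb{R})\tilde{J}$ into the quotient $\mathcal{C}=\mathcal{L}/\mathcal{K}$, and the Choi--Effros lifting theorem \cite{CE} (applied after unitizing the domain) supplies a u.c.p.\ lift $\theta$ with values in $\mathcal{L}\subset\mathbb{B}(H\otimes L^2(\mathbb{R}))=\mathbb{B}(L^2(M,\phi)\otimes L^2(\mathbb{R}))$. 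Since composing $\theta$ with the quotient map gives $\tilde{\nu}$, i.e.\ the multiplication map, we obtain $\theta(a\otimes\tilde{J}b\tilde{J})-a\tilde{J}b\tilde{J}\in\mathcal{K}=\mathbb{K}(H)\otimes\mathbb{B}(L^2(\mathbb{R}))$ for all $a,b\in A\rtimes_r\mathbb{R}$, which is clause $\rm(iii)$. I expect the main obstacle to be the equivariance bookkeeping in the last step of the construction and, relatedly, confirming that $\tilde{\nu}$ genuinely represents the multiplication map modulo $\mathcal{K}$ after all the identifications are made; once that is in place the conclusion follows by a direct appeal to Choi--Effros, exactly as in the construction of $\theta$ for condition $\rm (AO)^+$.
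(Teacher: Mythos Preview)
Your proposal is correct and follows essentially the same route as the paper: the lemma is stated as a summary of the construction carried out in the paragraphs immediately preceding it, and you have faithfully reassembled that construction---reading off clauses $\rm(i)$ and $\rm(ii)$ from $\rm(a)$ and $\rm(c)$, building $\nu$ from $\rm(b)$--$\rm(e)$, checking equivariance and passing to the crossed product to obtain $\tilde{\nu}$, and then invoking $\rm(f)$ and Choi--Effros for the u.c.p.\ lift. Your remark about unitizing before applying Choi--Effros is a reasonable technical caution that the paper leaves implicit.
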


Now we turn to show our main objects satisfy these conditions.

\begin{Pro}\label{quan aoc+}
Von Neumann algebras $L^\infty(A_o(F))$ and $L^\infty(A_u(F))$ for $F\in \mathrm{GL}(n,\mathbb{C})$ $(n\geq 3)$ with the Haar state $h$ satisfy condition $\rm (AOC)^+$.
\end{Pro}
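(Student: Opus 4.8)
The plan is to verify conditions (a)--(f) from the preceding lemma for $M = L^\infty(A_o(F))$ (the $A_u(F)$ case being analogous by \cite[Section 5]{VV08}), taking $A := \rho(C(\mathbb{G}))_{\rm red}$ — equivalently the image of $C(\mathbb{G})_{\rm red}$ under the right regular representation — as the dense $C^*$-subalgebra. Most of the structural ingredients are already assembled in Subsection \ref{uni dis}: the nuclear $C^*$-algebra $\mathcal{B}_\infty \rtimes_r \hat{\mathbb{G}}$, the $*$-homomorphisms $\pi_l, \pi_r$ into $\mathbb{B}(L^2(\mathbb{G}))/\mathbb{K}(L^2(\mathbb{G}))$ with commuting ranges, and the fact that the multiplication map $C(\mathbb{G})_{\rm red} \otimes C(\mathbb{G})_{\rm red} \to \mathbb{B}(L^2(\mathbb{G}))/\mathbb{K}(L^2(\mathbb{G}))$ is nuclear. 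So conditions (c), (d), (e) should follow by taking $C_l := \mathcal{B}_\infty \rtimes_r \hat{\mathbb{G}}$ (which contains $\mathbb{C}\rtimes_r\hat{\mathbb{G}} \simeq C(\mathbb{G})_{\rm red} \simeq A$) and $C_r := \mathrm{Ad}\,U$ of the same algebra, and invoking the commutation $[\hat\lambda(\mathcal{B}_\infty), \pi\circ\lambda(C(\mathbb{G}))]=0$ already recorded there.

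First I would verify (a): the modular group $\sigma_t^h$ acts on $A(\mathbb{G})$, and by the Woronowicz character formula $(\iota\otimes\sigma_t^h)(U^x) = (F_x^{it}\otimes 1)U^x(F_x^{it}\otimes 1)$ quoted in Subsection \ref{uni dis}, one sees that on each spectral block the action is implemented by the bounded operators $F_x^{it}$; since $\mathrm{Irred}(A_o(F)) \simeq \mathbb{N}$ and the representation decomposes blockwise, norm continuity of $t \mapsto \sigma_t^h$ on the dense $*$-subalgebra $A(\mathbb{G})$, hence on $A$ after checking uniform control of the $F_x$, gives a norm-continuous action and lets us form $A \rtimes_r \mathbb{R}$. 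Next, for (b), I would compute the conjugations: using the explicit formula $\Delta^{it} = \bigoplus_x (F_x^{it}\otimes F_{\bar x}^{-it})$ and $\tilde J \pi(x)\tilde J = JxJ\otimes 1$, $\tilde J(1\otimes\lambda_t)\tilde J = \Delta^{it}\otimes\rho_t$ from Subsection \ref{TT}, one checks that the generators $\pi(a)$, $JbJ\otimes 1$, $1\otimes\lambda_t$, $\Delta^{it}\otimes\rho_t$ and their integrated forms against $L^1(\mathbb{R})$ all land in the multiplier algebra $\mathcal{L} = M(\mathbb{K}(H)\otimes\mathbb{B}(L^2(\mathbb{R})))$ — the integrated $\lambda$ and $\rho$ terms provide the compactness in the first leg once combined with the fact that $C(\mathbb{G})_{\rm red}$ acting on $L^2(\mathbb{G})$ multiplied by such smoothing operators is compact, which is essentially the content of condition (AO) for $L^\infty(\mathbb{G})$.

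The main obstacle I expect is condition (f): one must show that the $(\mathbb{R}\times\mathbb{R})$-action $\mathrm{Ad}([1\otimes\lambda_s][\Delta^{it}\otimes\rho_t])$ preserves $\nu(C_l\otimes C_r)$ and is norm continuous there. The delicate point is that $C_l = \mathcal{B}_\infty\rtimes_r\hat{\mathbb{G}}$ is a crossed product, not just the image of $C(\mathbb{G})_{\rm red}$, so one must understand how the modular unitaries $\Delta^{it}$ interact with $\hat\lambda(\mathcal{B}_\infty)$ and with the covariant pieces $(\iota\otimes\pi\circ\rho)(\mathbb{V})$. I would argue that $\mathrm{Ad}\,\Delta^{it}$ permutes the building blocks $\psi_{y,x}$ defining $\mathcal{B}_0$ (since the $F_x$ are precisely the modular data entering those intertwiner maps) and that $\mathrm{Ad}\,\lambda_s$, $\mathrm{Ad}\,\rho_s$ act through the dual scaling/modular automorphisms, both of which are known to preserve $\mathcal{B}$ and $\mathcal{B}_\infty$ and act norm-continuously — this should reduce to facts in \cite{VVe} about the canonical actions on $\mathcal{B}_\infty$. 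Once (a)--(f) are in place, the lemma immediately yields condition $\rm(AOC)^+$ for $(M,h)$, and the $A_u(F)$ case follows verbatim with $\mathbb{N}*\mathbb{N}$ in place of $\mathbb{N}$.
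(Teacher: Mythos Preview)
Your overall strategy---verify (a)--(f) with $A=C(\mathbb{G})_{\rm red}$ and $C_l=C_r=\mathcal{B}_\infty\rtimes_r\hat{\mathbb{G}}$---matches the paper's. But you have misplaced the difficulty: you treat (e) as a consequence of the commutation $[\hat\lambda(\mathcal{B}_\infty),\pi\circ\lambda(C(\mathbb{G}))]=0$ recorded in Subsection~\ref{uni dis}, and flag (f) as the main obstacle. In the paper it is exactly the reverse.

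The commutation in Subsection~\ref{uni dis} takes place in $\mathbb{B}(L^2(\mathbb{G}))/\mathbb{K}(L^2(\mathbb{G}))$. Condition (e), by contrast, asks for commutation in $\mathcal{C}=M(\mathbb{K}(H)\otimes\mathbb{B}(L^2(\mathbb{R})))/(\mathbb{K}(H)\otimes\mathbb{B}(L^2(\mathbb{R})))$, where the map $\pi_l$ now involves the $t$-twisted representation $\pi(x)=\int_\mathbb{R}\Delta^{-it}x\Delta^{it}\otimes e_t\,dt$. Concretely one must show
\[
\int_\mathbb{R}\bigl[\Delta^{-it}\hat\lambda(b)\Delta^{it},\lambda({u^z_{i,j}}^*)\bigr]\otimes e_t\,dt\ \in\ \mathbb{K}(L^2(\mathbb{G}))\otimes\mathbb{B}(L^2(\mathbb{R})),
\]
and for this it is not enough that each commutator $[\Delta^{-it}\hat\lambda(b)\Delta^{it},\lambda({u^z_{i,j}}^*)]$ is compact; one needs a norm estimate \emph{uniform in $t$}. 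The paper obtains this by first proving $\Delta^{it}\hat\lambda(\psi_{x+y,x}(B))\Delta^{-it}=\hat\lambda(\psi_{x+y,x}(F_x^{it}BF_x^{-it}))$, which reduces the $t$-dependence to a unitary conjugation of the argument, and then invoking the quantitative bound from \cite[Proposition~3.8]{VVe}: $\|[\hat\lambda(\psi_{\infty,x}(A'))\otimes 1,W^*](p_{x+y}\otimes p_z)\|\le C(z)\|A'\|q^y$ with $0<q<1$, a bound independent of $t$ since $\|F_x^{-it}AF_x^{it}\|=\|A\|$. This uniform exponential decay is what forces the direct integral to lie in $\mathbb{K}\otimes\mathbb{B}$; nothing in Subsection~\ref{uni dis} gives it to you for free.

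The same uniformity issue governs (b): showing $\pi(d)\in M(\mathcal{K})$ for $d\in A$ requires $\sup_{t}\|(1-\sum_{x\le y}p_x)\sigma_t(d)T\|\to 0$ for rank-one $T$, which the paper verifies by a direct computation with $\hat\Psi$. Your sketch (``integrated $\lambda,\rho$ terms provide the compactness'') does not touch this case. As for (f), once the formula $\Delta^{it}\hat\lambda(\psi_{x+y,x}(B))\Delta^{-it}=\hat\lambda(\psi_{x+y,x}(F_x^{it}BF_x^{-it}))$ is in hand, invariance and norm continuity of $\nu(C_l\otimes C_r)$ under $\mathrm{Ad}([1\otimes\lambda_s][\Delta^{it}\otimes\rho_t])$ are immediate; the paper dismisses (f) in a single clause.
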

\begin{proof}
We keep the notations in Subsection $\ref{uni dis}$. Put $A:=C^*_{\rm red}(\mathbb{G})=\rho(C(\mathbb{G}))\subset \mathbb{B}(L^2(\mathbb{G}))$ and $C_l=C_r={\cal B}_\infty\rtimes_r\mathbb{G}$. We will verify all the conditions from (a) to (f) above. Note that the condition (a) is a well-know property.

For this, recall the following formula: for any irreducible decomposition $x\otimes y\simeq \sum_{z\in x\otimes y}\oplus z$, we have 
\begin{equation*}
F_x\otimes F_y\simeq\sum_{z\in x\otimes y}\oplus F_z \quad \textrm{on } H_x\otimes H_y\simeq \sum_{z\in x\otimes y}\oplus H_z.
\end{equation*}
Indeed this follows from a direct calculation of $(\iota\otimes\iota\otimes f_1)(U^x_{13}U^y_{23})$. By the formula, we have the following relation:
\begin{equation*}
\Delta^{it}\hat{\lambda}(\psi_{x+y,x}(B))\Delta^{-it}=\hat{\lambda}(\psi_{x+y,x}(F_{x}^{it}BF_x^{-it})) \quad (B\in \mathbb{B}(H_x)).
\end{equation*}
In this sense, the modular group $\Delta^{it}$ commutes with all $\psi_{x+y,y}$. 

To see the condition (b), observe first that for any $c\in \mathbb{B}(H\otimes L^2(\mathbb{R}))$, $c$ is contained in $ {\cal K}$ if and only if 
\begin{equation*}
\|\sum_{x=0}^{y}(p_x\otimes 1)c\sum_{x=0}^{z}(p_x\otimes 1) -c\|\rightarrow 0 \quad (y,z\rightarrow \infty).
\end{equation*} 
Let $a$ be an element in ${\cal K}$ and $b$ a generator of $D$. We will show $ba\in  {\cal K}$. 
The cases $b=1\otimes\lambda_t$, $\Delta_\phi^{it}\otimes\rho_{t}$, and $d\otimes 1$ $(d\in UAU^*)$ are trivial. 
The cases $b=\int_\mathbb{R} f(s)(1\otimes\lambda_s)\cdot ds$ and $\int_\mathbb{R} f(s)(\Delta_\phi^{is}\otimes\rho_{s})\cdot ds$ $(f\in L^1(\mathbb{R}))$ are easy since they commute with all $p_x$. 
For the final case $b=\pi(d)$ $(d\in A)$, we may assume $d=(\omega_{\xi,\eta}\otimes\rho)(\mathbb{V})$ for $\xi, \eta\in H_z$ and $z\in \mathrm{Irred}(\mathbb{G})$. Let $(\xi_k^z)_{k=1}^{n_z}$ be a fixed orthonormal basis of $H_z$.
Then we have
\begin{eqnarray*}
p_x\sigma_{t}(d)
&=&p_x\sigma_{t}((\omega_{\xi,\eta}\otimes\rho)(\mathbb{V}))\\
&=&(\omega_{\xi,\eta}\otimes\iota)((1\otimes p_x)\mathscr{V}_t) \qquad (\mathscr{V}_t:=(\iota\otimes\sigma_t\circ\rho)(\mathbb{V}))\\
&=&(\omega_{\xi,\eta}\otimes\iota)(\mathscr{V}_t\mathscr{V}_t^*(1\otimes p_x)\mathscr{V}_t)\\
&=&\sum_{k=1}^{n_z}(\omega_{\xi,\xi_k^z}\otimes\iota)(\mathscr{V}_t)(\omega_{\xi_k^z,\eta}\otimes\iota)((1\otimes \Delta^{it})\hat{\Psi}(p_x)(1\otimes \Delta^{-it}))
\end{eqnarray*}
for all $x\in\mathrm{Irred}(\mathbb{G})$, where $\sigma_t$ is the modular group for the Haar state and $\hat{\Psi}(p_x):=(\iota\otimes \rho)(\mathbb{V})^*(1\otimes p_x)(\iota\otimes \rho)(\mathbb{V})$. Since $\sum_{x=0}^yp_x$ converges to $1$ in the strong topology as $y\rightarrow \infty$ and $\hat{\Psi}$ is normal, 
each $(\omega_{\xi_k^z,\eta}\otimes\iota)((1\otimes \Delta^{it})\hat{\Psi}(\sum_{x=0}^yp_x)(1\otimes \Delta^{-it}))$ converges to $\omega_{\xi_k^z,\eta}(1)1$ in the strong topology. Hence for any compact operator $T\in \mathbb{K}(H)$, the equation
\begin{equation*}
\sum_{x=0}^yp_x\sigma_{t}(d)T=\sum_{k=1}^{n_k}(\omega_{\xi,\xi_k^z}\otimes\iota)(\mathscr{V}_t)(\omega_{\xi_k^z,\eta}\otimes\iota)((1\otimes \Delta^{it})\hat{\Psi}(\sum_{x=0}^yp_x)(1\otimes \Delta^{-it}))T
\end{equation*}
implies that $\sum_{x=0}^yp_x\sigma_{t}(d)T$ converges to $\sigma_{t}(d)T$ in the \textit{norm} topology as $y\rightarrow \infty$. We choose $T$ as an element of the set of all linear combinations of the form $\xi_k^a\otimes \xi_l^b$ for $a,b\in \mathrm{Irred}\mathbb{G}$ and $k,l$.
Then it is easy to verify that this convergence is uniform with respect to $t\in\mathbb{R}$, that is, 
\begin{equation*}
\sup_{t\in\mathbb{R}}\|(1-\sum_{x=0}^yp_x)\sigma_{t}(d)T\|\rightarrow 0 \qquad (y\rightarrow \infty).
\end{equation*}
Now, for any $S\in \mathbb{B}(L^2(\mathbb{R}))$, we have 
\begin{eqnarray*}
&&\|\sum_{x=0}^y(p_x\otimes1)\pi(d)(T\otimes S)-\pi(d)(T\otimes S)\|\\
&=&\|\int_\mathbb{R}(\sum_{x=0}^yp_x-1)\sigma_{-t}(d)\otimes e_t\cdot dt(T\otimes S)\|\\
&\leq&\sup_{t\in\mathbb{R}}\|(\sum_{x=0}^yp_x-1)\sigma_{t}(d)T\|\|S\|\rightarrow 0 \quad(y\rightarrow \infty).
\end{eqnarray*}
Hence, for any $a\in {\cal K}=\mathbb{K}(H)\otimes \mathbb{B}(L^2(\mathbb{R}))$, we have 
\begin{equation*}
\|\sum_{x=0}^y(p_x\otimes1)\pi(d)a-\pi(d)a\|\rightarrow 0 \quad(y\rightarrow \infty).
\end{equation*}
Since $\pi(d)a\sum_{x=0}^y(p_x\otimes1)$ converges to $\pi(d)a$ in the norm topology, $\pi(d)a$ is contained in $\cal K$, and we get the condition (b).

Next we define two maps $\pi_l$ and $\pi_r$. We begin with following maps
\begin{alignat*}{5}
{\cal B} \longrightarrow {\cal L}\ ;\ 
&a\ &\longmapsto& \ \pi(\hat{\lambda}(a)),\\
{\cal B} \longrightarrow {\cal L}\ ;\ &a\ &\longmapsto& \ U\hat{\lambda}(a)U^*\otimes1.
\end{alignat*}
It is not difficult to see that ranges of these maps are really contained in ${\cal L}$. Since images of $c_0(\hat{\mathbb{G}})$ by these maps are contained in $\cal K$, we have induced maps from ${\cal B}_\infty$ to ${\cal C}$. Simple calculations show that these maps make two covariant representations of ${\cal B}_\infty$ and the natural left action of $\hat{\mathbb{G}}$. Since this action is amenable we have following desired maps: 
\begin{alignat*}{5}
\pi_l&\colon& &C_l={\cal B}_\infty\rtimes_r \hat{\mathbb{G}}& \longrightarrow {\cal C},&\\
\pi_r&\colon& &C_r={\cal B}_\infty\rtimes_r \hat{\mathbb{G}}& \longrightarrow {\cal C}.&
\end{alignat*}

Finally we prove the condition (e) (and then the condition (f) is easily verified). For this, it suffices to see that $\pi\circ\hat{\lambda}({\cal B})$ (respectively, $\hat{\rho}({\cal B})\otimes 1$) commutes with $\lambda(C(\mathbb{G}))\otimes 1$ (respectively, $\pi\circ\rho(C(\mathbb{G}))$) after taking the quotient with $\cal K$. Here we treat only the case of $\pi\circ\hat{\lambda}({\cal B})$ and $\lambda(C(\mathbb{G}))\otimes 1$, and the other case follows from the same manner.

Let $z$ be an element of $\mathrm{Irred}(\mathbb{G})$ and write as $U^z=\sum_{i,j}u_{i,j}^z\otimes e_{i,j}$, where $(e_{ij})_{ij}$ is a fixed matrix unit in $\mathbb{B}(H_z)$. Our goal is to show 
\begin{equation*}
[\pi\circ\hat{\lambda}(b), \lambda({u_{i,j}^z}^*)\otimes 1]\in \mathbb{K}(L^2(\mathbb{G}))\otimes \mathbb{B}(L^2(\mathbb{R}))
\end{equation*}
for any $z,i,j$ and any $b\in {\cal B}$, where $[\cdot,\cdot]$ is the commutator. Since this term coincides with 
\begin{equation*}
\int_{\mathbb{R}}[\Delta^{-it}\hat{\lambda}(b)\Delta^{it},\lambda({u_{i,j}^z}^*)]\otimes e_t\cdot dt,
\end{equation*}
running over all $i$ and $j$, our goal is equivalent to 
\begin{equation*}
\int_{\mathbb{R}}\sum_{i,j}[\Delta^{-it}\hat{\lambda}(b)\Delta^{it},\lambda({u_{i,j}^z}^*)]\otimes e_{i,j}\otimes e_t\cdot dt \in \mathbb{K}(L^2(\mathbb{G}))\otimes\mathbb{B}(H_z)\otimes \mathbb{B}(L^2(\mathbb{R})),
\end{equation*}
and using $(\lambda\otimes\hat{\lambda})(\mathbb{V}_{21}^*)(1\otimes p_z)=\sum_{i,j}{u_{i,j}^z}^*\otimes e_{i,j}\otimes \mathrm{id}_{H_{\bar{z}}}$ (write $W:=(\lambda\otimes\hat{\lambda})(\mathbb{V}_{21})$), we further translate it as
\begin{equation*}
\int_{\mathbb{R}}[\Delta^{-it}\hat{\lambda}(b)\Delta^{it}\otimes p_z,W^*(1\otimes p_z)]\otimes e_t\cdot dt \in \mathbb{K}(L^2(\mathbb{G}))\otimes\mathbb{B}(H_z\otimes H_{\bar{z}})\otimes \mathbb{B}(L^2(\mathbb{R})).
\end{equation*}
For simplicity, we denote it by $\int_{\mathbb{R}}{\rm T}_{t}\otimes e_t\cdot dt$. 
If $b\in{\cal B}$ is finitely supported, that is, contained in a finite direct sum of $\mathbb{B}(H_x)$ ($x\in \mathrm{Irred}(\mathbb{G})$), then this final condition holds since this term is contained in $(\oplus_{\rm fin}\mathbb{B}(H_x\otimes H_{\bar{x}}))\otimes\mathbb{B}(H_z\otimes H_{\bar{z}})\otimes \mathbb{B}(L^2(\mathbb{R})).$ Hence we may assume that $b=\psi_{\infty, x}(A)\in {\cal B}$ for some $A\in\mathbb{B}(H_x)$, where $\psi_{\infty, x}(A)$ is defined by $\psi_{\infty, x}(A)p_y:=\psi_{y,x}(A)$ if $y\geq x$ and 0 otherwise.

Now by the proof of $\cite[\rm Proposition\ 3.8]{VVe}$, for any $y\in\mathrm{Irred}(\mathbb{G})$ with $y\geq z$ we have 
\begin{eqnarray*}
&&\| {\rm T}_t(p_{x+y}\otimes p_z)\|\\
&=&\|[\Delta^{-it}\hat{\lambda}(\psi_{\infty, x}(A))\Delta^{it}\otimes 1,W^*](p_{x+y}\otimes p_z)\|\\
&=&\|[\hat{\lambda}(\psi_{\infty, x}(F_x^{-it}AF_x^{it}))\otimes 1,W^*](p_{x+y}\otimes p_z)\| \\
&=&\|\{W(\hat{\lambda}(\psi_{\infty, x}(F_x^{-it}AF_x^{it}))\otimes 1)W^*-\hat{\lambda}(\psi_{\infty, x}(F_x^{-it}AF_x^{it}))\otimes 1\}(p_{x+y}\otimes p_z)\|\\
&=&\|(\hat{\lambda}\otimes\hat{\lambda})\{\hat{\Phi}(\psi_{\infty, x}(F_x^{-it}AF_x^{it})\otimes 1)-\psi_{\infty, x}(F_x^{-it}AF_x^{it})\otimes 1\}(p_{x+y}\otimes p_z)\|\\
&\leq&C(z)\|F_x^{-it}AF_x^{it}\|q^y=C(z)\|A\|q^{y},
\end{eqnarray*}
where $C(z)$ and $0<q<1$ are constants ($C(z)$ depends on $z$). Since this estimate does not depend on $t\in\mathbb{R}$, we have the following norm convergent sequence 
\begin{equation*}
\int_{\mathbb{R}}\sum_{k=0}^{y}({\rm T}_{t}(p_{x+k}\otimes p_z))\otimes e_t\cdot dt \rightarrow  \int_{\mathbb{R}}\sum_{k=0}^{\infty}({\rm T}_{t}(p_{x+k}\otimes p_z))\otimes e_t\cdot dt \quad (y\rightarrow \infty).
\end{equation*}
Now each element in this sequence is contained in $\mathbb{K}(L^2(\mathbb{G}))\otimes\mathbb{B}(H_z\otimes H_{\bar{z}})\otimes \mathbb{B}(L^2(\mathbb{R}))$ and the limit element coincides with $\int_{\mathbb{R}}{\rm T}_{t}\otimes e_t\cdot dt$. Hence we can end the proof.
\end{proof}

\section{\bf Absence of Cartan subalgebras}

In this section, we prove Theorem \ref{A} and B in the almost same way as $\cite[\rm Theorem\ 3.1]{PV12}$. Since many proofs are same, we often omit them.

\subsection{\bf Preparation with the $\bf W^*$CBAP and condition $\rm \bf (AO)^+$}

Since we have similar arguments for proofs of both theorems, we first assume that $M$ is arbitrary semifinite von Neumann algebra with separable predual, and we will give other assumptions in each lemma.

Let $M$ be a semifinite von Neumann algebra with separable predual, Tr a faithful normal semifinite trace, $p$ a Tr-finite projection in $M$, and let $A\subset pMp$ be a von Neumann subalgebra.  Write $P:={\cal N}_{pMp}(A)''$. For simplicity we assume $\mathrm{Tr}(p)=1$.
As usual, we identify $J_MMJ_M$ and $J_PPJ_P$ as opposite algebras $M^{\rm op}$ and $P^{\rm op}$ via natural identifications and write $\bar{a}:=(a^{\rm op})^*(=J_MaJ_M$ or $J_PaJ_P$) for $a\in M$ or $P$. 
Put $D:=M\odot M^{\rm op}\odot P^{\rm op}\odot P$ and define two $*$-homomorphisms 
\begin{alignat*}{5}
\Psi&\colon D\longrightarrow& &\mathbb{B}(L^2(M)\otimes L^2(M)\otimes L^2(P))&;&\ 
a\otimes b^{\rm op}\otimes x^{\rm op}\otimes y\longmapsto a\otimes b^{\rm op}\otimes x^{\rm op}y,\\
\Theta&\colon D\longrightarrow& &\mathbb{B}(L^2(M)\otimes L^2(P))&;&\ 
a\otimes b^{\rm op}\otimes x^{\rm op}\otimes y\longmapsto ab^{\rm op}\otimes x^{\rm op}y.
\end{alignat*}

The following theorem is due to Popa and Ozawa. In the theorem, $L^2(A)$, $L^2(P)$, and $L^2(pMp)$ means GNS representations of $\mathrm{Tr}(p\cdot p)$.

\begin{Thm}[{\cite{OP 07}\cite{Oz 10}}]\label{net}
If $pMp$ has the $W^*CBAP$ and $A$ is injective, then the conjugate action of ${\cal N}_{pMp}(A)$ on $A$ is weakly compact, that is, there exists a net $(\xi_i)_i$ of unit vectors in the positive cone of $L^2(A)\otimes L^2(A)\subset L^2(pMp)\otimes L^2(P)$ satisfying the following conditions:
\begin{itemize}
	\item[$\rm(i)$] $\langle (x\otimes1)\xi_i, \xi_i\rangle\rightarrow \mathrm{Tr}(x)$ for any $x\in pMp$;
	\item[$\rm(ii)$] $\|(a\otimes \bar{a})\xi_i-\xi_i\|\rightarrow 0$ for any $a\in {\cal U}(A)$;
	\item[$\rm(iii)$] $\|\xi_i-(u\otimes \bar{u})J(u\otimes \bar{u})J)\xi_i\|\rightarrow 0$ for any $u\in {\cal N}_{pMp}(A)$, where $J:=J_{pMp}\otimes J_P$.
\end{itemize}
\end{Thm}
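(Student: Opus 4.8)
Since this statement is the Ozawa--Popa weak compactness criterion (\cite{OP 07}, with the $W^*$CBAP refinement from \cite{Oz 10}), the proof amounts to invoking those references; the plan below sketches the strategy behind them. Write $N:=pMp$, let $\tau:=\mathrm{Tr}(p\cdot p)$ be the resulting tracial state, set $\mathcal G:=\mathcal N_N(A)$, and let $e_A\in\mathbb B(L^2 N)$ denote the Jones projection onto $L^2 A$. First I would reformulate: the required net $(\xi_i)$ exists if and only if there is a state $\Omega$ on $\mathbb B(L^2 N\otimes L^2 P)$ with $\Omega(e_A\otimes e_A)=1$, with $\Omega(x\otimes1)=\tau(x)$ for all $x\in N$, invariant under $\mathrm{Ad}(a\otimes\bar a)$ for $a\in\mathcal U(A)$ and under $\mathrm{Ad}((u\otimes\bar u)J(u\otimes\bar u)J)$ for $u\in\mathcal G$, and whose restrictions to $A\otimes1$ and to $1\otimes\overline A$ agree. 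Passing back from such an $\Omega$ to a net of unit vectors in the positive cone of $L^2(A\bar\otimes A^{\mathrm{op}})$ is the usual Day/Namioka convexity argument (upgrading weak invariance to norm invariance) together with the standard description of that cone; condition (i) for all $x\in N$ is exactly the trace property of $\Omega$ on $N$.

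I would then build $\Omega$ from two partial pieces. From injectivity of $A$: amenability of $A$ means the trivial $A$-bimodule $L^2 A$ is weakly contained in the coarse bimodule $L^2 A\otimes L^2 A$, and since the trace vector $\widehat1_A$ of the trivial bimodule is $A$-central, one gets a net of unit vectors $(\zeta_m)$ in the positive cone of $L^2(A\bar\otimes A^{\mathrm{op}})$ with $\|(a\otimes\bar a)\zeta_m-\zeta_m\|\to0$ for $a\in\mathcal U(A)$ and $\langle(x\otimes1)\zeta_m,\zeta_m\rangle\to\tau(x)$ for $x\in A$; these already furnish condition (ii), the positive-cone requirement, and condition (i) restricted to $A$. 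From the $W^*$CBAP of $N$: fix a net $(\phi_n)$ of normal finite-rank c.b.\ maps on $N$ with $\Lambda:=\limsup_n\|\phi_n\|_{\mathrm{cb}}<\infty$ and $\phi_n\to\mathrm{id}_N$ point-$\sigma$-weakly, then symmetrize ($\phi_n\mapsto\tfrac12(\phi_n+\phi_n^\dagger)$, $\phi_n^\dagger(x):=\phi_n(x^*)^*$), correct by $\tau$ to make the maps asymptotically trace-preserving, and run each through a Wittstock dilation to a normal u.c.p.\ map of controlled c.b.\ norm; the $\phi_n$'s then yield vectors in (an amplification of) the coarse $N$-bimodule that asymptotically witness $\tau$ on all of $N$ and are asymptotically invariant under $\mathrm{Ad}(u\otimes\bar u)$ for $u\in\mathcal G$ --- the latter because $\mathrm{id}_N$ is fixed by that action and $\phi_n\to\mathrm{id}_N$. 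Keeping these estimates uniform in $n$ is where the bound $\Lambda<\infty$ is indispensable, which is exactly the gap between $W^*$CBAP and mere point-$\sigma$-weak approximation by finite-rank normal maps.

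The heart of the proof is fusing the two pieces, and I expect this to be the main obstacle. The enabling elementary remark is that compressing a vector by $e_A\otimes e_A$ does not damage condition (i): for $x\in N$ one has $\langle(x\otimes1)(e_A\otimes e_A)\zeta,(e_A\otimes e_A)\zeta\rangle=\langle(E_A(x)\otimes1)\zeta,\zeta\rangle$, and $\tau\circ E_A=\tau$, so moving a vector into $L^2 A\otimes L^2 A$ this way turns $\tau(x)$ into $\tau(E_A(x))=\tau(x)$. The plan is therefore to reconcile the $\phi_n$-vectors (which see $\tau$ on $N$ and are asymptotically $\mathcal G$-invariant) with the $A$-central positive-cone vectors $\zeta_m$, the two being compatible because the hypothesis $\mathcal N_{pMp}(A)\subseteq pMp$ makes conjugation by $u\in\mathcal G$ a unitary of $L^2 A\subseteq L^2 N$ that intertwines the $A$-action correctly; one then takes a diagonal limit over $(n,m)$ organised so that the single constant $\Lambda$ bounds every error term at once, obtaining the state $\Omega$. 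The difficulty is to secure conditions (i), (ii) and (iii) \emph{simultaneously}, with errors that do not accumulate, and to land the limiting vectors in the positive cone of $L^2(A\bar\otimes A^{\mathrm{op}})$ rather than merely in the ambient Hilbert space; the uniform completely bounded bound keeps this quantitative, injectivity of $A$ supplies the positivity and $A$-centrality, $\phi_n\to\mathrm{id}_N$ supplies the $\mathcal G$-invariance, and $\mathcal N_{pMp}(A)\subseteq pMp$ keeps the normalizer's conjugation action compatible between $pMp$ and $A$.
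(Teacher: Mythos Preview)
The paper does not supply its own proof of this theorem: it is stated with attribution to \cite{OP 07} and \cite{Oz 10} and then used as a black box. Your opening sentence already recognises this, so in that sense your proposal matches the paper exactly.

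As for the sketch you give of the underlying argument, it is in the right spirit but somewhat more elaborate than what actually happens in \cite{OP 07,Oz 10}. There one does not build two separate nets (one from injectivity of $A$, one from the $W^*$CBAP of $N$) and then fuse them. Instead, starting from a net $(\phi_n)$ of finite-rank normal maps on $N=pMp$ witnessing the $W^*$CBAP, one considers the compressions $T_n:=E_A\circ\phi_n|_A$; each $T_n$ is a finite-rank map on $A$ and hence corresponds, via the Hilbert--Schmidt identification $L^2(A)\otimes L^2(A)\cong L^2(A\mathbin{\bar\otimes}A^{\mathrm{op}})$, to a single vector $\xi_n$. The estimates (i)--(iii) are then checked directly for $(\xi_n)$: (i) uses $\tau\circ E_A=\tau$ and $\phi_n\to\mathrm{id}_N$; (ii) uses that $\phi_n\to\mathrm{id}$ is an $A$-bimodule approximation; (iii) uses that $u\in\mathcal N_N(A)$ normalises $A$ so that $\mathrm{Ad}\,u$ commutes with $E_A$. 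Injectivity of $A$ enters only to pass to vectors in the \emph{positive cone} (via the Powers--St{\o}rmer/Haagerup inequality and a hypertrace), and the uniform bound $\limsup\|\phi_n\|_{\mathrm{cb}}<\infty$ is what keeps all errors controlled simultaneously --- this last point is precisely Ozawa's contribution in \cite{Oz 10}, relaxing the CMAP hypothesis of \cite{OP 07} to the $W^*$CBAP. So your ``fusion'' step, which you flag as the main obstacle, is in fact bypassed: the single net $(E_A\circ\phi_n|_A)$ already carries all the required structure.
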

\bigskip

Note that regarding $(\xi_i)_i$ as vectors in $L^2(M,\mathrm{Tr})\otimes L^2(P)$ by $L^2(pMp)\simeq pJ_MpJ_M L^2(M,\mathrm{Tr})\subset L^2(M,\mathrm{Tr})$, we get 
\begin{itemize}
	\item[$\rm(i')$] $\langle (x\otimes1)\xi_i, \xi_i\rangle\rightarrow \mathrm{Tr}(pxp)$ for any $x\in M$.
\end{itemize}
In our proof, we will use $(\xi_i)_i$ as vectors in the positive cone of $L^2(M)\otimes L^2(P)$ satisfying $(pJ_MpJ_M\otimes 1_P) \xi_i=\xi_i$ and conditions $\rm (i)'$, (ii), and (iii). 
In this case, we can exchange $L^2(M)$ and $L^2(P)$ with any other standard representations of $M$ and $P$, since these conditions are preserved under adjoint maps by canonical unitaries between standard representations.

We fix such a net $(\xi_i)_i$ and put $\Omega_1(x):={\rm Lim}_i\langle x\xi_i,\xi_i\rangle$ for $x\in \mathbb{B}(L^2(M)\otimes L^2(P))$, where Lim is taken by a fixed free ultra filter. 
Then conditions $\rm (i')$, (ii) and (iii) are translated as following conditions:
\begin{itemize}
	\item[$\rm(iv)$] $\Omega_1(x\otimes 1)= \mathrm{Tr}(pxp)$ for any $x\in M$;
	\item[$\rm(v)$] $\Omega_1(a\otimes \bar{a})=1$ for any $a\in {\cal U}(A)$;
	\item[$\rm(vi)$] $\Omega_1(\Theta(u\otimes \bar{u}\otimes \bar{u}\otimes u))=1$ for any $u\in {\cal N}_{pMp}(A)$.
\end{itemize}

We next prove the following lemma. In the original paper of Popa and Vaes, the proof of the corresponding statement is very technical (whose origin is in $\cite[\rm Lemma\ 6.2]{CSU11}$). In the present paper, we give a very simple proof which works only in our special situation. We are indebted to Eric Ricard for kindly demonstrating to the author this simple proof.

\begin{Lem}\label{net2}
Let $\Omega$ be a state on $\mathbb{B}(L^2(M)\otimes L^2(P))$ satisfying condition $\rm (v)$ above. 
Assume that $A$ is diffuse.
Then $\Omega(x\otimes 1)=0$ for any $x\in \mathbb{K}(L^2(M))$. In particular,
 there exists an increasing net $(p_j)_j$ of range finite projections in $\mathbb{B}(L^2(M))$ such that
\begin{itemize}
\item $p_j\rightarrow 1_M$ strongly;
\item $\Omega(p_j\otimes1_P)=0$ for any $p_j$.
\end{itemize}
\end{Lem}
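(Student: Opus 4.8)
The plan is to show that the state $\Omega$, restricted to $\mathbb{B}(L^2(M))\otimes 1_P$, annihilates all rank-one projections, hence all compact operators; the two bulleted consequences then follow by a routine functional-analytic argument. Since $A$ is diffuse, I can choose a sequence $(a_n)_n$ of unitaries in $A$ that tends to $0$ weakly in $\mathbb{B}(L^2(M))$ (e.g.\ take $A$ to contain a diffuse abelian subalgebra and pick Haar-like unitaries; the crucial point is $a_n\to 0$ $\sigma$-weakly, equivalently $\langle a_n\zeta,\zeta'\rangle\to 0$ for all $\zeta,\zeta'\in L^2(M)$). Condition $\rm(v)$ gives $\Omega(a_n\otimes\bar a_n)=1$ for every $n$. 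I will use these unitaries to ``push'' any compact operator to zero under $\Omega$.

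The key computation is a Cauchy--Schwarz estimate for the state $\Omega$. Fix a unit vector $\zeta\in L^2(M)$ and let $e=\langle\,\cdot\,,\zeta\rangle\zeta$ be the corresponding rank-one projection on $L^2(M)$. For each $n$, write
\begin{equation*}
\Omega(e\otimes 1_P)=\Omega\bigl((a_n^*\otimes\bar a_n^*)(a_n e a_n^*\otimes 1_P)(a_n\otimes\bar a_n)\bigr),
\end{equation*}
and using $\Omega(a_n\otimes\bar a_n)=1$ together with the Cauchy--Schwarz inequality for the positive form $(S,T)\mapsto\Omega(S^*T)$ (so that $\|(a_n\otimes\bar a_n)-1\|$ is ``small in the GNS space of $\Omega$'' — more precisely $\Omega$ is invariant along the net $(a_n\otimes\bar a_n)$ in the sense that $\Omega(T(a_n\otimes\bar a_n))=\Omega(T)+o(1)$ and similarly on the left), I reduce $\Omega(e\otimes 1_P)$ to $\lim_n\Omega(a_n e a_n^*\otimes 1_P)$. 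But $a_n e a_n^* = \langle\,\cdot\,,a_n\zeta\rangle\,a_n\zeta$ is the rank-one projection onto $\mathbb{C}a_n\zeta$, and since $a_n\to 0$ weakly, $a_n e a_n^*\to 0$ in the weak operator topology while staying bounded by $1$ in norm; in fact for any fixed finite-rank $T_0$ one gets $T_0 (a_n e a_n^*) T_0^*\to 0$ in norm, and a standard approximation argument (approximate $e\otimes 1_P$-type terms by finite-rank ones) forces $\Omega(a_n e a_n^*\otimes 1_P)\to 0$. Hence $\Omega(e\otimes 1_P)=0$, and by linearity and norm-density of finite-rank operators in $\mathbb{K}(L^2(M))$ together with $\|\Omega\|=1$, we get $\Omega(x\otimes 1_P)=0$ for all $x\in\mathbb{K}(L^2(M))$.

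For the ``in particular'' clause: the set $\mathcal{F}$ of finite-rank projections in $\mathbb{B}(L^2(M))$ is upward directed, $\bigvee_{p\in\mathcal F}p=1$ so $p\to 1_M$ strongly along $\mathcal F$; and every such $p$ lies in $\mathbb{K}(L^2(M))$, so $\Omega(p\otimes 1_P)=0$ by what was just proved. Taking $(p_j)_j$ to be this net (or any cofinal increasing sequence, possible since $L^2(M)$ is separable) finishes the proof.

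\medskip
The main obstacle I anticipate is making the ``invariance of $\Omega$ along $(a_n\otimes\bar a_n)$'' step fully rigorous: from $\Omega(a_n\otimes\bar a_n)=1$ and $\Omega$ a state, one gets via Cauchy--Schwarz that $\Omega(T(1-(a_n\otimes\bar a_n)))\to 0$ and $\Omega((1-(a_n\otimes\bar a_n))T)\to 0$ for every fixed $T$, but one must be careful that the error terms are controlled \emph{uniformly} enough to interchange with the limit $a_n e a_n^*\to 0$. The cleanest route is: fix $\varepsilon>0$, pick a finite-rank operator $T_\varepsilon$ with $\|e-T_\varepsilon\|<\varepsilon$ on the relevant subspace, bound $|\Omega(e\otimes 1_P)-\Omega(a_n e a_n^*\otimes 1_P)|$ by a constant times $\varepsilon$ plus $\|T_\varepsilon (a_n e a_n^*) T_\varepsilon^* - (\text{something}\to 0)\|$, and then let $n\to\infty$ and $\varepsilon\to 0$. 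This is elementary but needs to be written with a little care; everything else is bookkeeping.
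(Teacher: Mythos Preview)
Your approach starts correctly but has a genuine gap at the decisive step. From condition~(v), each $a_n\otimes\bar a_n$ is a unitary with $\Omega(a_n\otimes\bar a_n)=1$, so it lies in the multiplicative domain of $\Omega$; hence
\[
\Omega\bigl((a_n\otimes\bar a_n)(e\otimes 1_P)(a_n\otimes\bar a_n)^*\bigr)=\Omega(e\otimes 1_P)
\]
\emph{exactly}, i.e.\ $\Omega(a_n e a_n^*\otimes 1_P)=\Omega(e\otimes 1_P)$ for every $n$ (no Cauchy--Schwarz error term is needed). So far so good.

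The gap is in concluding $\Omega(a_n e a_n^*\otimes 1_P)\to 0$. The operators $a_n e a_n^*$ are rank-one projections of norm~$1$, and $a_n\to 0$ weakly only gives $a_n e a_n^*\to 0$ in the weak operator topology. The state $\Omega$ is \emph{not} assumed normal, so WOT convergence of norm-one operators says nothing about $\Omega$-values. Your proposed fix does not help: $e$ is already rank one, so the ``finite-rank approximation $T_\varepsilon$'' is vacuous; and while $T_0(a_n e a_n^*)T_0^*\to 0$ in norm for any fixed finite-rank $T_0$, there is no way to insert $T_0$ inside $\Omega(a_n e a_n^*\otimes 1_P)$. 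Concretely, you have shown that the sequence $n\mapsto\Omega(a_n e a_n^*\otimes 1_P)$ is \emph{constant}, equal to $\Omega(e\otimes 1_P)$; you cannot then argue it tends to~$0$ via mere WOT convergence.

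The paper supplies exactly the missing idea. Since $\Omega(p\otimes 1_P)=\Omega(u_k p u_k^*\otimes 1_P)$ for every $k$, one may average:
\[
\Omega(p\otimes 1_P)=\Omega\Bigl(\frac{1}{n}\sum_{k=1}^{n}u_k p u_k^*\otimes 1_P\Bigr).
\]
Because $A$ is diffuse, one can choose the weakly null unitaries $(u_k)$ so that the Ces\`aro averages $\frac{1}{n}\sum_{k=1}^{n}u_k p u_k^*$ converge to $0$ in \emph{norm} for every $p\in\mathbb K(L^2(M))$. Norm convergence is precisely what lets the (bounded, possibly non-normal) state $\Omega$ conclude. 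The averaging trick is the whole point of the argument; without it, weak convergence alone is insufficient.
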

\begin{proof}
Since $A$ is diffuse, we can find a sequence $(u_n)$ of unitaries in $A$ satisfying the following conditions:
\begin{itemize}
	\item $(u_n)_n$ converges to 0 in the $\sigma$-weak topology;
	\item $(\frac{1}{n} \sum_{k=1}^n u_k p u^{*}_k)_n$ converges to 0 in the \textit{norm} topology for any $p\in \mathbb{K}(L^2(B))$.
\end{itemize}
By condition (v), ${\cal U}(A)$ is contained in the multiplicative domain of $\Omega$ (e.g.\ $\cite[\textrm{Proposition 1.5.7}]{BO}$). 
Since each $u_n$ is a unitary in $A$,
 we have, for any $p\in \mathbb{K}(L^2(M))$
\begin{eqnarray*}
\Omega(p\otimes 1_P)&=&\Omega(u_kpu_k^* \otimes 1_P)\\
&=&\Omega(\frac{1}{n} \sum_{k=1}^n u_k p u^{*}_k \otimes 1_P) \rightarrow 0.
\end{eqnarray*} 
\end{proof}

Next, we assume that $M$ satisfies condition $\rm (AO)^+$ with a $\sigma$-weakly dense $C^*$-subalgebra  $M_0$ and a u.c.p.\ map $\theta$. Put $D_0:=M_0\odot M_0^{\rm op}\odot P^{\rm op}\odot P \subset D$. 
We use the map $\theta$ only in the following simple lemma. 


\begin{Lem}\label{ao net}
Let $p_j$ be range finite projections in $\mathbb{B}(L^2(M))$ with $p_j\rightarrow1$ strongly and assume $M$ satisfies condition $\rm (AO)^+$. Then we have $\limsup_j\|\Theta(S)(p_j^{\perp}\otimes1)\|\leq\|\Psi(S)\|$ for any $S\in D_0$, where $p_j^\perp:=1-p_j$.
\end{Lem}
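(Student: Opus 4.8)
The plan is to use the u.c.p.\ lift $\theta$ provided by condition $\rm (AO)^+$, tensored with the identity of $\mathbb{B}(L^2(P))$, in order to realise $\Theta(S)$ up to an operator that is ``compact in the $L^2(M)$-variable'' and that comes from an auxiliary element whose norm is exactly $\|\Psi(S)\|$.

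First I would fix $S\in D_0$ and write it as a finite sum $S=\sum_{k=1}^{n}a_k\otimes b_k^{\rm op}\otimes x_k^{\rm op}\otimes y_k$ with $a_k,b_k\in M_0$ and $x_k,y_k\in P$. Collapsing the last two legs by multiplication (legitimate since $P^{\rm op}$ and $P$ commute on $L^2(P)$) I set $\hat S:=\sum_{k}a_k\otimes b_k^{\rm op}\otimes (x_k^{\rm op}y_k)$, viewed inside $M_0\otimes_{\min}M_0^{\rm op}\otimes_{\min}\mathbb{B}(L^2(P))$, where $M_0^{\rm op}=J_MM_0J_M$. Since the minimal tensor norm is spatial, i.e.\ computable from any faithful representations of the three legs (here $M_0,M_0^{\rm op}\subset\mathbb{B}(L^2(M))$ and $C^*(P^{\rm op},P)\subset\mathbb{B}(L^2(P))$), the operator on $L^2(M)\otimes L^2(M)\otimes L^2(P)$ read off from $\hat S$ is exactly $\Psi(S)$, so that $\|\hat S\|_{\min}=\|\Psi(S)\|$.

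Next I would apply $\tilde\theta:=\theta\otimes\mathrm{id}_{\mathbb{B}(L^2(P))}$, which is u.c.p.\ hence contractive on the minimal tensor product, to $\hat S$, obtaining $\|\tilde\theta(\hat S)\|\leq\|\Psi(S)\|$. On the other hand,
\[
\tilde\theta(\hat S)-\Theta(S)=\sum_{k=1}^{n}\bigl(\theta(a_k\otimes b_k^{\rm op})-a_kb_k^{\rm op}\bigr)\otimes (x_k^{\rm op}y_k),
\]
and by condition $\rm (AO)^+$(ii) each $\theta(a_k\otimes b_k^{\rm op})-a_kb_k^{\rm op}$ lies in $\mathbb{K}(L^2(M))$ (note $b_k^{\rm op}=J_Mb_k^*J_M$ with $b_k^*\in M_0$), so this difference lies in $\mathbb{K}(L^2(M))\otimes\mathbb{B}(L^2(P))$. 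Since $p_j\to 1$ strongly and each $p_j$ has finite range, $\|Kp_j^{\perp}\|=\|K-Kp_j\|\to 0$ for every compact $K\in\mathbb{K}(L^2(M))$; applied to the finitely many compacts above this gives $\|(\tilde\theta(\hat S)-\Theta(S))(p_j^{\perp}\otimes1)\|\to 0$. Combining,
\[
\limsup_j\|\Theta(S)(p_j^{\perp}\otimes1)\|\leq\limsup_j\|\tilde\theta(\hat S)(p_j^{\perp}\otimes1)\|\leq\|\tilde\theta(\hat S)\|\leq\|\Psi(S)\|.
\]
The one place to be careful is the identification $\|\hat S\|_{\min}=\|\Psi(S)\|$, namely that collapsing the $P^{\rm op}$- and $P$-legs of $\Psi$ by multiplication is compatible with the spatial minimal tensor norm, together with the standard fact that a u.c.p.\ map stays contractive after tensoring with $\mathrm{id}_{\mathbb{B}(L^2(P))}$ over the minimal tensor product; the remaining estimates are routine.
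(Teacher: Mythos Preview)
Your proof is correct and is essentially the same argument as the paper's: the paper writes the key operator as $(\theta\otimes\mathrm{id})\circ\Psi(S)$, which is precisely your $\tilde\theta(\hat S)$ once one identifies $\Psi(S)$ with an element of $M_0\otimes_{\min}M_0^{\rm op}\otimes_{\min}\mathbb{B}(L^2(P))$. You have simply made explicit the norm identification $\|\hat S\|_{\min}=\|\Psi(S)\|$ and the well-definedness of the collapsed element $\hat S$, points the paper leaves implicit.
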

\begin{proof}
For $S:=a\otimes b^{\rm op}\otimes x^{\rm op}\otimes y\in D_0$, we have 
\begin{equation*}
(\theta\otimes{\rm id})\circ\Psi(S)-\Theta(S)=(\theta(a\otimes b^{\rm op})-ab^{\rm op})\otimes x^{\rm op}y\in\mathbb{K}(L^2(M))\otimes\mathbb{B}(L^2(P)).
\end{equation*}
Since  $p_j^{\perp}$ converges to 0 in the strong topology and $\theta(a\otimes b^{\rm op})-ab^{\rm op}\in\mathbb{K}(L^2(M))$, the net $(\theta(a\otimes b^{\rm op})-ab^{\rm op})p_j^{\perp}$ converges to 0 in the \textit{norm} topology.
Hence we have
\begin{eqnarray*}
\limsup_j\|\Theta(S)(p_j^{\perp}\otimes1)\|
&\leq&\limsup_j\|((\theta\otimes{\rm id})\circ\Psi(S)-\Theta(S))(p_j^{\perp}\otimes1)\|+\|\Psi(S)\|\\
&=&\|\Psi(S)\|.
\end{eqnarray*}
This holds for any $S\in D_0$ by the completely same manner.
\end{proof}

\subsection{\bf Proof of Theorem \ref{A}}
We prove the following theorem which is slightly general than Theorem \ref{A}.

\begin{Thm}\label{thmA}
Let $M$ be a semifinite von Neumann algebra with separable predual and $p$ a finite projection in $M$. If $M$ satisfies condition $\rm (AO)^+$ and has the $\it W^*CBAP$, then $pMp$ is strongly solid.
\end{Thm}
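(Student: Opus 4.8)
The plan is to follow the strategy of \cite[Theorem 3.1]{PV12} closely, using the tools assembled in the previous subsection. Let $A\subset pMp$ be a diffuse injective subalgebra and put $P:={\cal N}_{pMp}(A)''$; our aim is to show $P$ is injective. First I would reduce to the case $\mathrm{Tr}(p)=1$ and set up the weakly compact action: since $pMp$ has the $\mathrm{W}^*$CBAP and $A$ is injective, Theorem \ref{net} gives a net $(\xi_i)_i$ of unit vectors in the positive cone of $L^2(A)\otimes L^2(A)$ with properties $(\rm i')$, (ii), (iii), which translate into a state $\Omega_1$ on $\mathbb{B}(L^2(M)\otimes L^2(P))$ satisfying (iv), (v), (vi). The point of the whole argument is to upgrade $\Omega_1$ into a $P$-central state witnessing injectivity of $P$.

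The key steps, in order, are as follows. By Lemma \ref{net2}, diffuseness of $A$ forces $\Omega_1$ to annihilate $\mathbb{K}(L^2(M))\otimes 1_P$, so there is an increasing net $(p_j)_j$ of range-finite projections with $p_j\to 1_M$ strongly and $\Omega_1(p_j\otimes 1_P)=0$. Composing $\Omega_1$ with $\mathrm{Ad}(p_j^\perp\otimes 1)$ and taking a further limit, I obtain a state $\Omega$ on $\mathbb{B}(L^2(M)\otimes L^2(P))$ that still satisfies (v) and (vi) but now also has the property that it ``lives at infinity'' in the $L^2(M)$-variable. Next comes the crucial estimate: for $S\in D_0=M_0\odot M_0^{\rm op}\odot P^{\rm op}\odot P$, Lemma \ref{ao net} gives $\limsup_j\|\Theta(S)(p_j^\perp\otimes 1)\|\le\|\Psi(S)\|$, so the functional $S\mapsto \Omega(\Theta(S))$ is dominated in norm by $\|\Psi(S)\|$ and hence factors through a state $\omega$ on (the $C^*$-algebra generated by) $\Psi(D_0)$, i.e. on $M_0\otimes_{\min}(M_0^{\rm op}\otimes_{\rm bin}(P^{\rm op}\rtimes P))$ -- here local reflexivity of $M_0$ (condition (i) of $\rm (AO)^+$) is what lets me pass to the minimal tensor product. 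Restricting $\omega$ to $1_M\otimes 1_{M^{\rm op}}\otimes (P^{\rm op}\odot P)$ and using condition (vi) (which says $\omega$ is invariant under conjugation by the unitaries $u^{\rm op}\otimes u$, $u\in{\cal N}_{pMp}(A)$, hence under all of $P$ by density and normality considerations) produces a state on $\mathbb{B}(L^2(P))$ that restricts to the trace on $P$ and is $P$-central; composing with the natural conditional-expectation-type map $\mathbb{B}(L^2(P))\to (P')$ gives a hypertrace for $P$, so $P$ is injective.

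The main obstacle -- as in \cite{PV12} -- is the bookkeeping in the second half: making sure that the state $\omega$ obtained from $\Psi(D_0)$ genuinely restricts to something meaningful on the $P^{\rm op}\odot P$-corner and that condition (vi) really translates into full $P$-centrality rather than mere invariance under the normalizing unitaries. One has to argue that since ${\cal N}_{pMp}(A)''=P$, invariance under the $u^{\rm op}\otimes u$ propagates, by Kaplansky density and normality of the relevant slice maps, to a trace-like $P$-central functional; this is where the ``positive cone'' normalization of $(\xi_i)_i$ and the multiplicative-domain trick from Lemma \ref{net2} are used again. The use of the u.c.p. lift $\theta$ itself is confined to Lemma \ref{ao net} and is the one place where $\rm (AO)^+$ is genuinely stronger than plain (AO); everything else is a matter of carefully transporting the Popa--Vaes argument to the semifinite, non-group setting, which the excerpt has already arranged so that the proof is ``almost the same'' and most routine verifications can be omitted.
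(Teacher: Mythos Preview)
Your overall strategy matches the paper's (indeed, both are transporting the Popa--Vaes argument), but the endgame contains a genuine gap and a misattribution.

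\textbf{The gap: wrong restriction.} After obtaining the state $\Omega_2$ on $C^*(\Psi(D))$ and Hahn--Banach extending to $\mathbb{B}(L^2(M)\otimes L^2(M)\otimes L^2(P))$, you restrict to the third tensor slot $\mathbb{C}1_M\otimes\mathbb{C}1_{M^{\rm op}}\otimes \mathbb{B}(L^2(P))$ and claim the result restricts to the trace on $P$. That claim is unjustified: the only trace condition available is (iv), $\Omega_1(x\otimes 1)=\mathrm{Tr}(pxp)$, which controls the \emph{first} factor, not the third. There is no listed property ensuring $\Omega_1(1\otimes y)=\tau_P(y)$. The paper instead restricts $\Omega_2$ to $p\mathbb{B}(L^2(M))p\otimes\mathbb{C}1_M\otimes\mathbb{C}1_P$. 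Condition (iv) then gives the trace restriction on $pMp$ directly, and condition (vi) places $\Psi(u\otimes\bar u\otimes\bar u\otimes u)=u\otimes u^{\rm op}\otimes u^{\rm op}u$ in the multiplicative domain of $\Omega_2$, so restricting to the first slot yields $\mathrm{Ad}(u)$-invariance for every $u\in{\cal N}_{pMp}(A)$, hence $P$-centrality. No Kaplansky density or normality is invoked --- the state $\Omega_2$ is not normal, and centrality follows simply because the normalizing unitaries generate $P$.

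\textbf{The misattribution: the $D_0\to D$ step.} You say local reflexivity lets you ``pass to the minimal tensor product'', but $\Psi$ is already a $*$-homomorphism and hence automatically min-bounded. The real issue is that condition (vi) involves $u\in P\subset M$, so you need the estimate $|\Omega_1(\Theta(S))|\le C\|\Psi(S)\|$ for $S\in D$, not just $S\in D_0$. The paper handles this by using the $\mathrm{W}^*$CBAP together with local reflexivity through Lemma~\ref{ao w}: the finite-rank maps $\psi_j\colon M\to M_0$ let one approximate $\Theta(S)$ and $\Psi(S)$ for $S\in D$ by their counterparts in $D_0$, with the error controlled via condition (iv). Your ``Kaplansky density and normality of the relevant slice maps'' is not an adequate substitute for this step.

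The unnecessary detour of composing $\Omega_1$ with $\mathrm{Ad}(p_j^\perp\otimes 1)$ is harmless; the paper simply uses that $\Omega_1(p_j\otimes 1)=0$ together with Cauchy--Schwarz to get $\Omega_1(\Theta(S))=\Omega_1(\Theta(S)(p_j^\perp\otimes 1))$ directly.
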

\begin{proof}
Let $A\subset pMp$ be a diffuse injective von Neumann subalgebra. 
Take $(\xi_i)_i$, $\Omega_1$ $D$, $D_0$, $\Psi$, and $\Theta$ as in the previous subsection for the pair $A\subset pMp$. Take any increasing net $(p_j)_j$ of finite rank projections in $\mathbb{B}(L^2(M))$. 
Under this setting, we can completely follow the proof of $\cite[\rm Subsection\ 3.4]{PV12}$. Here we give a sketch of the proof for reader's convenience.

By Lemma $\ref{net2}$ and $\ref{ao net}$, we have for any $S\in D_0$
\begin{equation*}
|\Omega_1(\Theta(S))|=\limsup_j|\Omega_1(\Theta(S)(p_j^\perp\otimes1))|\leq\limsup_j\|\Theta(S)(p_j^\perp\otimes1)\|\leq\|\Psi(S)\|.
\end{equation*}
We can extend this inequality on $D$ (up to a scalar multiple) by condition (iv) of $\Omega_1$ and the $\rm W^*CBAP$ of $M$ (use Lemma $\ref{ao w}$ if necessary). Then a positive functional $\Omega_2$ on $C^*\{\Psi(D)\}$ is defined by $\Omega_2(\Psi(X)):=\Omega_1(\Theta(X))$. This is a state since $\Omega_2(1)=1$. 
Take a Hahn--Banach extension of $\Omega_2$ on $\mathbb{B}(L^2(M)\otimes L^2(M)\otimes L^2(P))$. 
Thanks for conditions (iv) and (vi) of $\Omega_1$, the restriction of this extended state on $p\mathbb{B}(L^2(M))p\otimes \mathbb{C}1_M\otimes \mathbb{C}1_P$ is an $P$-central state which restricts $\mathrm{Tr}(p\cdot p)$ on $pMp$. 
Hence $P$ is injective.
\end{proof}

\subsection{\bf Proof of Theorem \ref{B}}

We next prove Theorem \ref{B} with a very similar argument. We actually prove the following statement.

\begin{Thm}\label{thmB}
Let $M$ be a von Neumann algebra with separable predual and $\phi$ a faithful normal state on $M$. Let $N\subset M$ be a diffuse non-injective von Neumann subalgebra with a faithful normal conditional expectation $E_N$ which preserves $\phi$. If $(M,\phi)$ satisfies condition $\rm (AOC)^{+}$ and has the $\it W^*CBAP$, then $N$ has no {$\phi$-Cartan subalgebras}.
\end{Thm}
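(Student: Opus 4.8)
The plan is to deduce the statement, following \cite{HR} and \cite{HV12}, from a reduced form of Theorem~\ref{thmA} applied to the continuous core $\widetilde M:=M\rtimes_{\sigma^\phi}\mathbb{R}$. Suppose, for a contradiction, that $N$ has a $\phi$-Cartan subalgebra $A$. Since $N$ carries a $\phi$-preserving faithful normal conditional expectation $E_N$, $\sigma^\phi$ restricts to $N$; and since $A$ is a $\phi$-Cartan subalgebra, Takesaki's theorem gives $\sigma^\phi_t(A)=A$, and $\sigma^\phi_t|_A=\mathrm{id}$ because $A$ is abelian. Hence, as recalled in Subsection~\ref{TT}, $A\rtimes_{\sigma^\phi}\mathbb{R}=A\otimes L\mathbb{R}$ sits inside $\widetilde N:=N\rtimes_{\sigma^\phi}\mathbb{R}\subseteq\widetilde M$, and for every $\mathrm{Tr}$-finite projection $p\in L\mathbb{R}$ the algebra $A\otimes pL\mathbb{R}p$ is a Cartan subalgebra of the \emph{finite} von Neumann algebra $p\widetilde N p$. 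In particular $A\otimes pL\mathbb{R}p$ is a diffuse (as $A$ is diffuse) injective (being abelian) subalgebra of the finite von Neumann algebra $p\widetilde M p$; the unitaries of $p\widetilde N p$ normalizing it normalize it also in $p\widetilde M p$, so $p\widetilde N p\subseteq P:=\mathcal{N}_{p\widetilde M p}(A\otimes pL\mathbb{R}p)''$. It therefore suffices to show $P$ is injective: then, since $E_N$ is $\phi$-preserving it induces a normal conditional expectation $\widetilde M\to\widetilde N$ which restricts to one from $P$ onto $p\widetilde N p$, so $p\widetilde N p$ is injective for every $\mathrm{Tr}$-finite $p\in L\mathbb{R}$; letting $p\nearrow 1$ forces $\widetilde N$, and hence (since $\mathbb{R}$ is amenable and its dual action on $\widetilde N$ has fixed-point algebra $N$) also $N$, to be injective --- a contradiction.

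To prove that $P$ is injective, I would re-run the proof of Theorem~\ref{thmA} (that is, \cite[\rm Subsection\ 3.4]{PV12}) word for word with $\widetilde M$ in place of $M$. Here $\widetilde M$ is semifinite and, because $\mathbb{R}$ is amenable, still has the $W^*$CBAP; so Theorem~\ref{net} supplies the weakly compact net and the state $\Omega_1$ on $\mathbb{B}(L^2(\widetilde M)\otimes L^2(P))$ with properties (iv)--(vi), in the standard form $L^2(\widetilde M)=L^2(M,\phi)\otimes L^2(\mathbb{R})$. One cannot simply invoke Theorem~\ref{thmA}, since $\rm (AOC)^+$ does \emph{not} yield condition $\rm (AO)^+$ for $\widetilde M$: the relevant ``compact'' ideal is $\mathbb{K}(L^2(M))\otimes\mathbb{B}(L^2(\mathbb{R}))$, strictly larger than $\mathbb{K}(L^2(\widetilde M))$ --- indeed, by Theorem~\ref{C}, $\widetilde M$ is in general only semisolid, not solid. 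This enlargement affects only the analogues of Lemmas~\ref{net2} and~\ref{ao net}, and in complementary ways. In the analogue of Lemma~\ref{net2} the diffuseness used is that of $A\subseteq N$, whose unitaries act on $L^2(M)\otimes L^2(\mathbb{R})$ as $u\otimes 1$ (as $\sigma^\phi$ is trivial on $A$); choosing them with Ces\`aro averages of $u_k(\cdot)u_k^\ast$ vanishing in norm on $\mathbb{K}(L^2(M))$ and using (v), one gets $\Omega_1$ vanishing on $\mathbb{K}(L^2(M))\otimes\mathbb{B}(L^2(\mathbb{R}))\otimes 1_P$, in particular on $(q\otimes 1_{L^2(\mathbb{R})})\otimes 1_P$ for every finite-rank $q\in\mathbb{B}(L^2(M))$. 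In the analogue of Lemma~\ref{ao net} one takes $p_j:=q_j\otimes 1_{L^2(\mathbb{R})}$ with $q_j\nearrow 1$ of finite rank on $L^2(M)$ (these are not finite rank on $L^2(\widetilde M)$, which is irrelevant) and uses the u.c.p.\ map $\theta$ of Definition~\ref{aoc+}: for $S$ in the dense $*$-subalgebra $(A\rtimes_r\mathbb{R})\odot(A\rtimes_r\mathbb{R})^{\rm op}\odot P^{\rm op}\odot P$, the difference $(\theta\otimes\mathrm{id})\circ\Psi(S)-\Theta(S)$ lies in $\mathbb{K}(L^2(M))\otimes\mathbb{B}(L^2(\mathbb{R}))\otimes\mathbb{B}(L^2(P))$ and is annihilated in norm by $p_j^\perp\otimes 1_P$, so $\limsup_j\|\Theta(S)(p_j^\perp\otimes 1_P)\|\le\|\Psi(S)\|$ and hence $|\Omega_1(\Theta(S))|\le\|\Psi(S)\|$. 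Extending this to $\widetilde M\odot\widetilde M^{\rm op}\odot P^{\rm op}\odot P$ using the $W^*$CBAP and local reflexivity of $A\rtimes_r\mathbb{R}$ (condition (ii) of Definition~\ref{aoc+}; cf.\ Lemma~\ref{ao w}), putting $\Omega_2(\Psi(X)):=\Omega_1(\Theta(X))$ and taking a Hahn--Banach extension, the restriction to $p\mathbb{B}(L^2(\widetilde M))p\otimes\mathbb{C}1\otimes\mathbb{C}1_P$ is a $P$-central state restricting to $\mathrm{Tr}(p\,\cdot\,p)$ on $p\widetilde M p$; hence $P$ is injective.

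The hard part will be verifying that $\rm (AOC)^+$ plugs into the Popa--Vaes argument exactly where honest $\rm (AO)^+$ does, despite the ``compact'' ideal being enlarged from $\mathbb{K}(L^2(\widetilde M))$ to $\mathbb{K}(L^2(M))\otimes\mathbb{B}(L^2(\mathbb{R}))$; this should work because the two uses of the ideal are each confined to the $L^2(M)$ leg --- the averaging in Lemma~\ref{net2} runs along $A$, which lives on that leg, and the cutting in Lemma~\ref{ao net} happens on that leg, where the tail of $\theta$ is compact --- but the bookkeeping with standard forms and the identifications $L^2(\widetilde M)=L^2(M)\otimes L^2(\mathbb{R})$, $\widetilde J\pi(x)\widetilde J=JxJ\otimes 1$, etc.\ has to be done carefully. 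Two routine auxiliary facts are also needed: that the $W^*$CBAP passes from $M$ to $M\rtimes_{\sigma^\phi}\mathbb{R}$ (amenability of $\mathbb{R}$), and that injectivity of $p\widetilde N p$ for $\mathrm{Tr}$-finite $p\in L\mathbb{R}$ increasing to $1$ forces injectivity of $\widetilde N$, and hence, again by amenability of $\mathbb{R}$ acting as the dual action with fixed-point algebra $N$, of $N$.
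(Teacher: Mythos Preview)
Your proposal is correct and follows essentially the same route as the paper: pass to the continuous core, use that the Cartan $A$ gives $A\otimes pL\mathbb{R}p$ Cartan in $p\widetilde N p$, and re-run the Popa--Vaes argument of Theorem~\ref{thmA} with the enlarged ``compact'' ideal $\mathbb{K}(L^2(M))\otimes\mathbb{B}(L^2(\mathbb{R}))$, using that unitaries of $A$ act as $u\otimes 1$ (diffuseness of $A$ on the $L^2(M)$ leg for Lemma~\ref{net2}) and condition $\rm(AOC)^+$ for Lemma~\ref{ao net}. The only cosmetic difference is that the paper fixes a single $p$ with $p\widetilde N p$ non-injective and derives the contradiction there, whereas you show injectivity of $P$ for all $p$ and then push it down to $N$; these are logically equivalent.
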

\begin{proof}
Suppose by contradiction that $N$ has a {$\phi$-Cartan subalgebra} $B$ with a $\phi$-preserving conditional expectation $E_B$.  
Taking crossed products by $\mathbb{R}$ with the modular action of $\phi$, we have inclusions $B\otimes L\mathbb{R}\subset N\rtimes\mathbb{R}:={\cal N}\subset M\rtimes\mathbb{R}:={\cal M}$. Denote by Tr the canonical trace on $\cal M$. 
We can find a non zero projection $p$ in $L\mathbb{R}\subset{\cal M}$ with $\mathrm{Tr}(p)<\infty$ such that $p{\cal N}p$ is still non-injective. Write $A:=B\otimes pL\mathbb{R}p$ and $P:={\cal N}_{p{\cal M}p}(A)''$. 
Note that $P$ is non-injective since it contains the non-injective subalgebra ${\cal N}_{p{\cal N}p}(A)''=p{\cal N}p$ (see Subsection \ref{TT}).
We will apply an almost same argument as that in Theorem \ref{A} to the inclusion $A\subset p{\cal M}p$, and will get injectivity of $P$ which means a contradiction.

Let $(\xi_i)_i$, $\Omega_1$ $D$, $\Psi$, and $\Theta$ be as before for the pair $A\subset p{\cal M}p$ ($\cal M$ has {the $\rm W^*CBAP$}, see $\cite[4.10]{An95}$). We define $D_0:=M_0\rtimes_r \mathbb{R}\odot (M_0\rtimes_r \mathbb{R})^{\rm op}\odot P^{\rm op} \odot P$, where $M_0$ is a $\sigma$-weakly dense $C^*$-subalgebra of $M$ as in the definition of condition $\rm (AOC)^+$. 
As mentioned in the observation below Theorem $\ref{net}$, we can choose $\xi_i$ as vectors in $L^2({\cal M},\tilde{\phi})\otimes L^2(P)$. Recall $L^2({\cal M},\tilde{\phi})=L^2(M)\otimes L^2(\mathbb{R})$.
In the setting, by a similar argument to that in Lemma $\ref{net2}$ and $\ref{ao net}$, we can prove the following statements:
\begin{itemize}
	\item $\Omega_1(x\otimes 1_{L\mathbb{R}}\otimes 1_P)=0$ for any $x\in \mathbb{K}(L^2(M))$;
	\item for any range finite projections $p_j$ in $\mathbb{B}(L^2(M))$ with $p_j\rightarrow1$ strongly, we have $\limsup_j\|\Theta(S)(p_j^{\perp}\otimes1_{L\mathbb{R}}\otimes 1_P)\|\leq\|\Psi(S)\|$ for any $S\in D_0$.
\end{itemize}
Here we used diffuseness of $B$ (not of $A$) and condition $\rm (AOC)^+$ of $M$. 
Now we can completely follow the proof the Theorem \ref{A} to get injectivity of $P$. 
%
\end{proof}
\section{\bf Semisolidity of continuous cores}
In the section, we prove Theorem \ref{C}. Our proof is a variant of the proof of $\cite[\rm Theorem\ 4.6]{solid 3}$ and is very similar to that of $\cite[\rm Theorem\ 5.3.3]{Is}$.

\subsection{\bf Proof of Theorem \ref{C}}

For simplicity, we write the core of $M$ as ${\cal M}:=M\rtimes_{\sigma^\phi} \mathbb{R}$. We use $L^2({\cal M},\tilde{\phi})=L^2(M)\otimes L^2(\mathbb{R})$ as a representation (although $\tilde{\phi}$ may not be a trace). Let $N$ be a type $\rm II_1$ subalgebra of $p{\cal M}p$. Since $N$ contains a copy of the AFD $\rm II_1$ factor, we may assume that $N$ itself is the AFD $\rm II_1$ factor. 
Let $N_n\subset N$ $(n\in\mathbb{N})$ be an increasing finite dimensional unital $C^{\ast}$-subalgebras whose union is dense in $N$. We define a conditional expectation from $\mathbb{B}(L^2({\cal M}))$ onto $N'\cap p\mathbb{B}(L^2({\cal M}))p=N'p$ by
\begin{equation*}
\Psi_{N}(x):=\mathrm{Lim}_{n}\int_{{\cal U}(N_{n})}uxu^{\ast}du,
\end{equation*}
where $du$ is the normalized Haar measure on ${\cal U}(N_{n})$ and Lim is taken by a fixed ultrafilter. Then $\Psi_{N}$ satisfies a properness condition
\begin{equation*}
\Psi_{N}(x)\in \overline{\rm{co}}^w\{uxu^{\ast}\mid u\in {\cal U}(N)\} \quad (x\in\mathbb{B}(L^2({\cal M}))).
\end{equation*}
We only prove that $\mathbb{K}(L^2(M))\otimes\mathbb{B}(L^2(\mathbb{R}))\subset \ker \Psi_{N}$ and the theorem follows from same manners as that in $\cite[\rm Theorem\ 4.6]{solid 3}$ or $\cite[\rm Theorem\ 5.3.3]{Is}$.

To see this, It suffice to show that $\Psi_{N}(x\otimes1)=0$ for $x=J\hat{a}\otimes J\hat{b}\in \mathbb{K}(L^2(M))$, where $a,b\in M$  and we used the Hilbert--Schmidt correspondence $(\xi\otimes \eta)\zeta:= \langle \zeta, \eta\rangle \xi$.
Then since 
\begin{eqnarray*}
\Psi_N((J\hat{a}\otimes J\hat{b})\otimes1)
=(JaJ\otimes1)\Psi_N((\hat{1}\otimes \hat{1})\otimes1)(Jb^*J\otimes1),
\end{eqnarray*}
we may assume $a=b=1$. Write $e:=\hat{1}\otimes \hat{1}$, which is the orthogonal projection onto $\mathbb{C}\hat{1}$.
Let $q$ be any projection in $L\mathbb{R}$ with $\mathrm{Tr}(q)<\infty$. 
Put $\tilde{q}:=\tilde{J}q\tilde{J}\in {\cal M}'\subset N'$, $\widetilde{N}:=N\tilde{q}$ and $\Psi_{\widetilde{N}}(x):=\tilde{q}\Psi_N(x)\tilde{q}$ for $x\in \mathbb{B}(L^2({\cal M}))$. 
We actually prove $\Psi_{\widetilde{N}}(e\otimes 1)=0$ (this means $\Psi_N(e\otimes 1)=0$ by the choice of $q$).

Since $\Psi_N$ is proper, $\Psi_N(e\otimes1)$ commutes with $\Delta_{\phi}^{it}\otimes \rho_t$ and  so it is contained in $pN'\cap \rho(L\mathbb{R})'$, where $\rho(\lambda_t):=\Delta_{\phi}^{it}\otimes \rho_t$. 
Hence $\Psi_{\widetilde{N}}(e\otimes1)$ is contained in $\tilde{q}(pN'\cap \rho(L\mathbb{R})')\tilde{q}\subset N'\cap \rho(qL\mathbb{R}q)'$.
Let $r$ be any spectral projection of $\Psi_{\widetilde{N}}(e\otimes1)$, which corresponds to the interval $[\epsilon, \|\Psi_{\widetilde{N}}(e\otimes1)\|]$ for any small $\epsilon>0$. 
Then since $r$ is also contained in $N'\cap \rho(qL\mathbb{R}q)'$ and $r\leq p\tilde{q}$, $rL^2({\cal M})$ has a natural $N$-$qL\mathbb{R}q$-submodule structure of $p\tilde{q}L^2({\cal M})(\simeq p\tilde{q}L^2(f{\cal M}f)$, where $f:=p\vee q$). 
Since we know $N\not\preceq_{f{\cal M}f}L\mathbb{R}q$ (because $N$ is of type $\rm II_1$ and $L\mathbb{R}$ is of type I), by Theorem \ref{em thm} and the comment below it, the dimension of $rL^2({\cal M})$ with respect to $(L\mathbb{R}q,\mathrm{Tr}_{L\mathbb{R}q})$ is zero or infinite, where $\mathrm{Tr}_{L\mathbb{R}q}:=\mathrm{Tr}(\cdot )/\mathrm{Tr}(q)$ for the canonical trace Tr on ${\cal M}$.

Let $W$ be the unitary on $L^2({\cal M})=L^2(M)\otimes L^2(\mathbb{R})$ given by $(W\xi)(t):=\Delta_{\phi}^{it}\xi(t)$. Then easy calculations show that 
\begin{eqnarray*}
W(1\otimes \lambda_t)W^*=\Delta^{it}\otimes \lambda_t, \quad 
W(\Delta^{it}\otimes \rho_t)W^*=1\otimes \rho_t
\end{eqnarray*}
and hence we have 
\begin{eqnarray*}
\rho(qL\mathbb{R}q)' \cap \mathbb{B}(L^2({\cal M})q)
=\tilde{q}\rho(L\mathbb{R})' \tilde{q}
=W^*(\mathbb{B}(L^2(M))\otimes \bar{q}L\mathbb{R})W,
\end{eqnarray*}
where $\bar{q}:= J_{L\mathbb{R}}qJ_{L\mathbb{R}}$ (note $1\otimes\bar{q}=W\tilde{q}W^*$).
Then $\dim_{L\mathbb{R}q}rL^2({\cal M})$ coincides with that of the right $L\mathbb{R}q$-module $WrW^*(L^2(M)\otimes L^2(\mathbb{R})q)$, where the right action is given by $1\otimes \rho_t\bar{q}$ ($t\in \mathbb{R}$). 
From the fundamental theory of dimension, the dimension of $WrW^*(L^2(M)\otimes L^2(\mathbb{R})q)$ is smaller than $(\mathrm{Tr}_{L^2(M)}\otimes \mathrm{Tr})(WrW^*)/\mathrm{Tr}(q)$,  and this is finite since
\begin{eqnarray*}
(\mathrm{Tr}_{L^2(M)}\otimes\mathrm{Tr})(WrW^*)
&\leq&C\cdot(\mathrm{Tr}_{L^2(M)}\otimes\mathrm{Tr})(W\Psi_{\widetilde{N}}(e\otimes1)W^*)\\
&\leq&C\cdot(\mathrm{Tr}_{L^2(M)}\otimes\mathrm{Tr})(W\tilde{q}(e\otimes 1)\tilde{q}W^*)\\
&=&C\cdot\mathrm{Tr}_{L^2(M)}(e)\mathrm{Tr}(\bar{q})<\infty,
\end{eqnarray*}
where $C$ is a positive constant and we used properness of $\Psi_N$. Hence we have  $r=0$ and $\Psi_N(e\otimes1)=0$. Thus we proved the claim.


\subsection{\bf A remark on solidity and centralizer algebras}\label{so}

In the previous subsection, we proved semisolidity of continuous cores of some type $\rm III_1$ factors. This property itself has nothing to say about original type $\rm III_1$ factors at a first glance, but it has an interesting application once we get a stronger property, namely, solidity of the continuous cores. Indeed Houdayer gave the following observation $\cite[\rm Subsection\ 3.3]{Ho08}$.

Let $M$ be a type $\rm III_1$ factor and assume that the continuous core $M\rtimes \mathbb{R}$ is solid as a $\rm II_\infty$ factor. Let $\phi$ be a faithful normal state on $M$ and $M_\phi$ be the centralizer of $\phi$ (see for example $\cite[\rm Definition\ VIII.2.1]{Tak 2}$).
Then by Takesaki's conditional expectation theorem $\cite[\rm Theorem\ IX.4.2]{Tak 2}$, there exists the unique $\phi$-preserving conditional expectation $E$ from $M$ onto $M_\phi$. Hence by the observation in Subsection $\ref{TT}$, we have 
$M_\phi\otimes L\mathbb{R}=M_{\phi}\rtimes_{\sigma^\phi}\mathbb{R}\subset M\rtimes_{\sigma^\phi}\mathbb{R}.$ 
Then for any $\mathrm{Tr}$-finite projection $p\in L\mathbb{R}$, $M_\phi\otimes \mathbb{C}p$ is injective since it is contained in the relative commutant of $L\mathbb{R}p$ and $L\mathbb{R}$ is diffuse. 
Thus the solidity of the continuous core forces all the centralizers (with respect to states) to be injective. 

To apply this observation to our main objects, we next recall Connes' discrete decomposition of full type $\rm III$ factors $\cite[\rm Section\ 4]{Co74}$ (See also $\cite[\rm Section\ 2]{Dy94}$). 
\begin{Thm}
Let $M$ be a full type $\rm III$ factor with separable predual and $\phi$ a faithful normal state on $M$. Assume that $\phi$ is $\mathrm{Sd}(M)(=:\Gamma)$-almost periodic. Then there exists a decomposition $M\simeq (M \otimes \mathbb{B}(\ell^2(\Gamma)))_{\phi\otimes \omega}\rtimes\Gamma$ with a faithful normal semifinite weight $\omega$ on $\mathbb{B}(\ell^2(\Gamma))$. 
The algebra $(M \otimes \mathbb{B}(\ell^2(\Gamma)))_{\phi\otimes \omega}$ is a $\rm II_\infty$ factor and is isomorphic to $M_\phi\otimes \mathbb{B}(H)$ for some separable Hilbert space $H$.
\end{Thm}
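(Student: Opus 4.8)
The plan is to reduce the statement to Connes' structure theory of full factors, sketching the construction and citing \cite{Co74} (and \cite{Dy94}) for the genuinely analytic input rather than reproving it. First I would unwind the hypothesis: $\phi$ being $\mathrm{Sd}(M)$-almost periodic means precisely that $\Delta_\phi$ has pure point spectrum contained in $\Gamma$, so that $M$ carries a $\Gamma$-grading by the spectral subspaces $M_\gamma:=\{x\in M\mid \sigma^\phi_t(x)=\gamma^{it}x\ (t\in\mathbb{R})\}$, with $M_\gamma M_\delta\subseteq M_{\gamma\delta}$, $M_\gamma^{\,*}=M_{\gamma^{-1}}$, $M_1=M_\phi$, and $\sum_\gamma M_\gamma$ $\sigma$-weakly dense in $M$; equivalently $\sigma^\phi$ is the restriction to $\mathbb{R}$ of a pointwise-norm continuous action $\rho$ of the compact dual group $\widehat{\Gamma}$ on $M$ with $M^{\widehat{\Gamma}}=M_\phi$. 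Already here the hypothesis that $M$ is a \emph{full} factor will be invoked, through Connes' analysis of $\mathrm{Sd}$ and $\chi$, to guarantee that $M_\phi$ is a $\mathrm{II}_1$ factor.

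Next I would construct the semifinite algebra explicitly. Fix matrix units $(e_{\gamma\delta})_{\gamma,\delta\in\Gamma}$ of $\mathbb{B}(\ell^2(\Gamma))$ and let $\omega$ be the faithful normal semifinite weight on $\mathbb{B}(\ell^2(\Gamma))$ with $\sigma^\omega_t(e_{\gamma\delta})=(\delta/\gamma)^{it}e_{\gamma\delta}$. Put $\psi:=\phi\otimes\omega$ on $\mathcal{N}:=M\otimes\mathbb{B}(\ell^2(\Gamma))$; then $\sigma^\psi_t(x\otimes e_{\gamma\delta})=(\mu\delta/\gamma)^{it}\,x\otimes e_{\gamma\delta}$ for $x\in M_\mu$, so that $\mathcal{N}_\psi=(M\otimes\mathbb{B}(\ell^2(\Gamma)))_\psi$ is the $\sigma$-weakly closed span of $\{x\otimes e_{\gamma\delta}\mid x\in M_{\gamma/\delta}\}$. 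On $\mathcal{N}_\psi$ the weight $\psi$ is a trace, so $\mathcal{N}_\psi$ is semifinite; moreover $\mathcal{N}_\psi$ contains the infinite family of pairwise orthogonal projections $1\otimes e_{\gamma\gamma}$ with $(1\otimes e_{\gamma\gamma})\mathcal{N}_\psi(1\otimes e_{\gamma\gamma})=M_\phi\otimes e_{\gamma\gamma}$ of type $\mathrm{II}_1$, so once it is known to be a factor it is of type $\mathrm{II}_\infty$. To identify it with $M_\phi\otimes\mathbb{B}(H)$, I would straighten the grading using partial isometries in the spectral subspaces $M_\gamma$ — supplied by Connes' structure theorem for full factors with $\mathrm{Sd}(M)=\Gamma$ (the ``generalized eigenoperators''/Connes cocycles) — which exhibit each $M_\gamma$ as a corner of the trivial $M_\phi$-bimodule; amplifying by $\mathbb{B}(\ell^2(\Gamma))$ absorbs all of them and yields $\mathcal{N}_\psi\cong M_\phi\otimes\mathbb{B}(H)$ for a separable $H$ (possibly a proper subspace of $\ell^2(\Gamma)$, which is why the statement only claims ``some'' $H$).

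Finally I would recover $M$ as a crossed product. There is a natural trace-scaling action $\alpha$ of $\Gamma$ on $\mathcal{N}_\psi$, given by translating the matrix units $e_{\delta\varepsilon}\mapsto e_{\gamma\delta,\gamma\varepsilon}$ together with the rescaling forced by $\psi$, and the content is that $\mathcal{N}_\psi\rtimes_\alpha\Gamma\cong M$. The cleanest route is the discrete analogue of Takesaki--Takai duality for the compact-group action $\rho$: under the straightening above $M\rtimes_\rho\widehat{\Gamma}$ is canonically $\mathcal{N}_\psi$, the bidual $\Gamma$-action is $\alpha$, and $(M\rtimes_\rho\widehat{\Gamma})\rtimes_\alpha\Gamma\cong M\otimes\mathbb{B}(\ell^2(\Gamma))$, which is $\cong M$ because $M$ is of type $\mathrm{III}$ and hence absorbs $\mathbb{B}(\ell^2(\Gamma))$; alternatively one exhibits the covariant pair ($M_\phi$ with the graded pieces, and the eigenoperators implementing $\Gamma$) inside $M$ directly and checks, via the $\Gamma$-grading, that it generates $M$ and matches the crossed-product structure. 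All of this is carried out in \cite[Section 4]{Co74} (see also \cite[Section 2]{Dy94}), so in the write-up I would simply cite those references. I expect the main obstacle to be the factoriality of $\mathcal{N}_\psi$ (and of $M_\phi$): this is the one step that is not formal bookkeeping with the grading, and it rests on the spectral-gap/ergodicity analysis underlying Connes' treatment of $\mathrm{Sd}$ for full factors.
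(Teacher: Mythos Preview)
The paper does not prove this theorem at all: it is merely \emph{recalled} from the literature, with the attribution ``Connes' discrete decomposition of full type $\mathrm{III}$ factors $\cite[\textrm{Section 4}]{Co74}$ (See also $\cite[\textrm{Section 2}]{Dy94}$)'' and no further argument. Your proposal already recognizes this --- you end by saying you would ``simply cite those references'' --- so you and the paper agree completely on how to handle the statement. The sketch you give of the construction (the $\Gamma$-grading of $M$ by spectral subspaces of $\sigma^\phi$, the explicit weight $\omega$ on $\mathbb{B}(\ell^2(\Gamma))$, the identification of $\mathcal{N}_\psi$, and the Takesaki--Takai duality recovering $M$) is an accurate outline of what is in Connes' paper, and more than the present paper provides; if you want to include it as background it does no harm, but it is not required.
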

By the crossed product decomposition in the theorem, since $M$ is non-injective and $\Gamma$ is amenable, $(M \otimes \mathbb{B}(\ell^2(\Gamma)))_{\phi\otimes \omega}$ is also non-injective. 
Hence we get non-injectivity of $M_\phi$.

We turn to see our main objects. Let $\mathbb{G}$ be a universal quantum group $A_o(F)$ for $F\in \mathrm{GL}(n,\mathbb{C})$ $(n\geq 3)$ with $F\bar{F}=\pm 1$. 
Assume that $\|F\|^2\leq \mathrm{Tr}_n(FF^*)/\sqrt{5}$, where $\mathrm{Tr}_n$ is the trace on $\mathbb{M}_n(\mathbb{C})$ with $\mathrm{Tr}_n(1)=n$. Then recall from $\cite[\rm Theorem\ 7.1]{VVe}$ that $L^\infty(\mathbb{G})$ then satisfies following conditions:
\begin{itemize}
\item the algebra $L^\infty(\mathbb{G})$ is a full factor and the Haar state $h$ is almost periodic;
\item the invariant $\mathrm{Sd}(L^\infty(\mathbb{G}))$ is the subgroup $\Gamma$ of $\mathbb{R}_+^*$ generated by eigenvalues of $Q\otimes Q^{-1}$, where $Q^{-1}:=FF^*$. In particular $L^\infty(\mathbb{G})$ is of type $\rm II_1$ if $FF^*=1$; of type $\rm III_\lambda$ $(0<\lambda<1)$ if $\Gamma=\lambda^\mathbb{Z}$; of type $\rm III_1$ in the other cases.
\end{itemize}
In the case, since the Haar state $h$ is $\mathrm{Sd}(L^\infty(\mathbb{G}))$-almost periodic, we have non-injectivity of $L^\infty(\mathbb{G})_h$ by the theorem above (in the $\rm II_1$ factor case $FF^*=1$, non-injectivity is trivial). Hence Houdayer's observation says that the continuous core of $L^\infty(\mathbb{G})$ is not solid. We summary this result as follows.
\begin{Cor}
Let $\mathbb{G}$ be a universal quantum group $A_o(F)$ for $F\in \mathrm{GL}(n,\mathbb{C})$ $(n\geq3)$ with $F\bar{F}=\pm1$. Assume that $\|F\|^2\leq \mathrm{Tr}_n(FF^*)/\sqrt{5}$. 
Denote the continuous core of $L^\infty(\mathbb{G})$ by $\cal M$ and the canonical trace on $\cal M$ by $\rm{Tr}$. Then for any $\rm{Tr}$-finite projection $p$ in ${\cal M}$ with $p{\cal M}p$ non-injective, $p{\cal M}p$ is semisolid but never solid.
\end{Cor}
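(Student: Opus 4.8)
The plan is to treat the two conclusions separately, and for the ``never solid'' part to split according to the type of $L^\infty(\mathbb{G})$, which under the standing hypotheses is already pinned down. Semisolidity is immediate: by Proposition~\ref{quan aoc+} the pair $(L^\infty(\mathbb{G}),h)$ satisfies condition $\rm (AOC)^+$, so Theorem~\ref{C} applies verbatim and gives that $p{\cal M}p$ is semisolid for every $\mathrm{Tr}$-finite projection $p$; the estimate $\|F\|^2\le\mathrm{Tr}_n(FF^*)/\sqrt 5$ plays no role here. So the real point is to show that a non-injective corner $p{\cal M}p$ is never solid.

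For this I would first invoke \cite[Theorem~7.1]{VVe} --- this is where $\|F\|^2\le\mathrm{Tr}_n(FF^*)/\sqrt 5$ enters: under this hypothesis $L^\infty(\mathbb{G})$ is a full factor, the Haar state $h$ is $\mathrm{Sd}(L^\infty(\mathbb{G}))$-almost periodic, and $L^\infty(\mathbb{G})$ is of type $\rm II_1$ (when $FF^*=1$), of type $\rm III_\lambda$ for some $0<\lambda<1$, or of type $\rm III_1$. In the first two cases the core ${\cal M}$ has \emph{diffuse} centre --- equal to $L\mathbb{R}$ in the $\rm II_1$ case, and to the (periodic) flow of weights in the $\rm III_\lambda$ case --- and this alone suffices: for any $\mathrm{Tr}$-finite projection $p$ the algebra $Z(p{\cal M}p)$ contains $Z({\cal M})z(p)$, where $z(p)$ is the central support of $p$, and since $Z({\cal M})$ has no atoms this is a diffuse abelian subalgebra of $p{\cal M}p$ whose relative commutant is all of $p{\cal M}p$; hence $p{\cal M}p$ cannot be solid once it is non-injective.

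In the remaining type $\rm III_1$ case ${\cal M}$ is a $\rm II_\infty$ factor, and I would run Houdayer's argument \cite[Subsection~3.3]{Ho08} localised to finite corners. The two inputs are: first, that $L^\infty(\mathbb{G})_h$ is non-injective --- this comes from Connes' discrete decomposition \cite[Section~4]{Co74} recalled above, which realises $L^\infty(\mathbb{G})_h\otimes\mathbb{B}(H)$ as the base of a crossed product of $L^\infty(\mathbb{G})\otimes\mathbb{B}(\ell^2(\Gamma))$ by the amenable group $\Gamma=\mathrm{Sd}(L^\infty(\mathbb{G}))$, together with non-injectivity of $L^\infty(\mathbb{G})$ \cite{Ba97}; and second, that $L\mathbb{R}'\cap{\cal M}=L^\infty(\mathbb{G})_h\otimes L\mathbb{R}$. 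Cutting by a nonzero $\mathrm{Tr}$-finite projection $p_0\in L\mathbb{R}$ then exhibits inside $p_0{\cal M}p_0$ the diffuse abelian subalgebra $L\mathbb{R}p_0$, with relative commutant $L^\infty(\mathbb{G})_h\otimes p_0L\mathbb{R}p_0$, still non-injective, so $p_0{\cal M}p_0$ is not solid. Finally, since solidity of a finite von Neumann algebra passes to corners and ${\cal M}$ is a factor, given an arbitrary $\mathrm{Tr}$-finite $p$ (necessarily $\mathrm{Tr}(p)>0$) I would choose $p_0\in L\mathbb{R}$ with $\mathrm{Tr}(p_0)\le\mathrm{Tr}(p)$ and a partial isometry $v$ with $v^*v=p_0$, $vv^*\le p$, so that $v(\cdot)v^*$ carries $p_0{\cal M}p_0$ isomorphically onto the corner $vv^*{\cal M}vv^*$ of $p{\cal M}p$; hence $p{\cal M}p$ is not solid either.

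I do not expect a genuine obstacle: the argument is a reassembly of Theorem~\ref{C}, the structure theorem \cite[Theorem~7.1]{VVe} of Vaes--Vergnioux, Connes' discrete decomposition, and Houdayer's commutant computation. The step that deserves the most care is the organisation of the three type cases, and --- within the type $\rm III_1$ case --- the passage between finite corners: one has to make sure the non-solid corner manufactured from $L\mathbb{R}$ can be transported, by comparison of projections in the $\rm II_\infty$ factor ${\cal M}$, onto a corner of the given $p{\cal M}p$.
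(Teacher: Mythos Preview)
Your proof is correct and its core is the same as the paper's: semisolidity from Theorem~\ref{C} via Proposition~\ref{quan aoc+}, and non-solidity from non-injectivity of the centralizer $L^\infty(\mathbb{G})_h$ (obtained through Connes' discrete decomposition) combined with Houdayer's relative-commutant observation. The paper runs the centralizer argument uniformly for all types---noting only parenthetically that in the $\rm II_1$ case non-injectivity of $L^\infty(\mathbb{G})_h=L^\infty(\mathbb{G})$ is immediate---and does not explicitly justify the passage from the particular projection in $L\mathbb{R}$ to an arbitrary $\mathrm{Tr}$-finite $p$. Your case split supplies exactly that step: the diffuse-centre shortcut in types $\rm II_1$ and $\rm III_\lambda$ handles every $p$ directly and more elementarily (bypassing the centralizer there), while in type $\rm III_1$ your comparison-of-projections argument transports the non-solid corner built from $L\mathbb{R}$ into any given $p{\cal M}p$. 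One cosmetic correction: in the non-$\rm III_1$ argument write $Z({\cal M})p$ rather than $Z({\cal M})z(p)$ for the diffuse abelian subalgebra sitting inside $p{\cal M}p$ (the latter has unit $z(p)\ge p$); the two are canonically isomorphic, so the reasoning is unaffected.
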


\noindent 
{\bf Acknowledgement.} The author would like to thank Professors Yasuyuki Kawahigashi, who is his adviser, Eric Ricard, Reiji Tomatsu, Yoshimichi Ueda, and Makoto Yamashita for their valuable comments. In particular, he appreciate his colleague Yuki Arano for fruitful conversations on compact quantum groups. He was supported by Research Fellow of the Japan Society for the Promotion of Science.



\end{document}